\documentclass[12pt]{amsart}
\usepackage{fullpage}
\usepackage[T1]{fontenc}
\usepackage{amsmath,amssymb,amsthm,mathtools,amscd}
\usepackage{booktabs,array}
\usepackage{tikz-cd}
\usepackage[colorlinks=true,linkcolor=red,urlcolor=blue,citecolor=blue]{hyperref}
\hypersetup{pdftitle={Compatibility of Perverse Truncation of Ngô Fibrations}}

\makeatletter
\def\thm@space@setup{%
  \thm@preskip=2ex \thm@postskip=2ex
}
\makeatother
\usepackage{todonotes}
\definecolor{pistachio}{rgb}{0.58, 0.77, 0.45}
\definecolor{eggshell}{rgb}{0.94, 0.92, 0.84}

\usepackage{mathrsfs}
\newtheorem{theorem}{Theorem~}[section]
\newtheorem{proposition}[theorem]{Proposition~}
\newtheorem{lemma}[theorem]{Lemma~}
\newtheorem{corollary}[theorem]{Corollary~}

\newtheorem{assumption}[theorem]{Assumption}
\theoremstyle{definition}
\newtheorem{definition}[theorem]{Definition~}
\theoremstyle{definition}
\newtheorem{remark}[theorem]{Remark~}

\numberwithin{equation}{section}
\newcommand{\Q}{\mathbb Q}

\newcommand{\C}{\mathbb C}
\newcommand{\Z}{\mathbb Z}
\newcommand{\Db}{D^b_c}
\newcommand{\ptau}{{}^p\!\tau}
\newcommand{\pH}{{}^p\!\mathcal H}
\newcommand{\IC}{\operatorname{IC}}
\newcommand{\Hom}{\operatorname{Hom}}
\newcommand{\Ext}{\operatorname{Ext}}

\newcommand{\im}{\operatorname{im}}
\newcommand{\id}{\operatorname{id}}
\newcommand{\Lie}{\operatorname{Lie}}
\newcommand{\Ab}{\operatorname{Ab}}
\newcommand{\Pic}{\operatorname{Pic}}

\newcommand{\GL}{\operatorname{GL}}
\newcommand{\SL}{\operatorname{SL}}

\newcommand{\ad}{\operatorname{ad}}
\newcommand{\rk}{\operatorname{rk}}

\newcommand{\RHom}{R\!\operatorname{Hom}}
\newcommand{\cHom}{\mathcal{H}om}
\newcommand{\cExt}{\mathcal{E}xt}
\newcommand{\cRHom}{R\cHom}
\title[Compatibility of perverse truncation]{Compatibility of Perverse Truncation of \\ Ng\^o Fibrations}
\author{Bin Wang}
\address{Hetao Institute of Mathematics and Interdisciplinary Sciences, Shenzhen, China}
\email{matph92@gmail.com}
\author{Shihao Wang}
\address{Qiuzhen College, Tsinghua University, Beijing, China}
\email{wangshihao25@mails.tsinghua.edu.cn}
\author{Chenglong Yu}
\address{Center for Mathematics and Interdisciplinary Sciences, Fudan University and Shanghai Institute for Mathematics and Interdisciplinary Sciences (SIMIS), Shanghai, China}
\email{yuchenglong@simis.cn}
\date{}
\subjclass[2020]{Primary 14F08; Secondary 14D06, 14H60}
\begin{document}
\begin{abstract}
We study compatibility of perverse truncations for morphisms between Ng\^o fibrations with rationally smooth total spaces. We show that, under an effective rank inequality, equivariant pullbacks preserve the normalized perverse truncations. Applied to the diagonal, this gives multiplicativity of perverse truncations for Hitchin systems with rationally smooth total spaces.
\end{abstract}
\maketitle
\tableofcontents

\section{Introduction}
Ng\^o introduced weak abelian fibrations and established his support theorem in the proof of the fundamental lemma \cite[Section~7]{Ngo}. Commutative group actions and $\delta$-regularity constrain the supports and local systems occurring in the decomposition theorem. De Cataldo--Rapagnetta--Sacc\`a refined this description in terms of Ng\^o strings \cite[Appendix~A]{DCRS}, while Maulik--Shen extended the support theorem to suitable equivariant complexes, including intersection complexes \cite[Theorems~1.1 and~1.8]{MSChi}. More recently, de Cataldo--Fringuelli--Fernandez Herrero--Mauri introduced the notion of a Ng\^o fibration and proved that canonical and logarithmic Hitchin fibrations with positive twisting degree are Ng\^o fibrations \cite[Definition~1.1 and Theorem~A]{DCFFM}. We follow the notion of Ng\^o fibration in \cite[Definition~1.1]{DCFFM}: a weak abelian fibration satisfying $\delta$-regularity and a cohomological bound on the direct image of its intersection complex. For rationally smooth total spaces considered here, the cohomological bound follows from the fiber-dimension bound.

This paper studies compatibility of perverse truncations for Ng\^o fibrations with rationally smooth total spaces. We give a criterion for the natural pullback associated with an equivariant morphism of fibrations to preserve the perverse truncations. In fact, we study more general fibrations  and the criterion is motivated by the $\delta$-regularity and weak abelian fibrations.

Throughout, varieties and reductive groups are over $\C$. The basic object of study is a \emph{comparison} $(F,a)$, by which we mean a commutative square
\begin{equation}\label{eq:intro-comparison}
\begin{tikzcd}
X\ar[r,"F"]\ar[d,"h_X"']&Y\ar[d,"h_Y"]\\
B_X\ar[r,"a"']&B_Y.
\end{tikzcd}
\end{equation}
The square need not be Cartesian. Set
\[
K_X=Rh_{X*}\Q_X,\qquad K_Y=Rh_{Y*}\Q_Y,
\qquad d_X=\dim B_X,\quad d_Y=\dim B_Y.
\]
Then we have the natural map:
\[
\beta_{F,a}:a^*K_Y\longrightarrow K_X.
\]
On both sides, we have the perverse truncations:
\[
\ptau_{\le d_X+k}K_X, \qquad \ptau_{\le d_Y+k}K_Y
\]
Following conventions in \cite{MSYCycles}, the indices are normalized by base dimension, so that for a proper fibration the index $k$ agrees with fiber cohomological degree. 
We ask whether $\beta_{F,a}$ preserves the perverse truncations. More precisely, whether $\beta_{F,a}$ admits compatible factorizations
\begin{equation}\label{eq:intro-weak}
a^*(\ptau_{\le d_Y+k}K_Y)\longrightarrow
\ptau_{\le d_X+k}K_X,\qquad k\in\Z.
\end{equation}
These factorizations imply
\[
F^*P_kH^m(Y,\Q)\subseteq P_kH^m(X,\Q).
\]
We call \eqref{eq:intro-weak} \emph{the compatibility of perverse truncation} with respect to the comparison. 

\subsection{Compatibility under equivariant comparison}
Let $(X,P_X,B_X)$ and $(Y,P_Y,B_Y)$ be projective weak abelian fibrations, as in \cite[Section~1.1]{MSChi}, see also Section~\ref{subsec:weak-abelian}. Assume that the bases are smooth, the commutative group schemes are smooth separated with connected fibers, and, for $Z=X,Y$,
\[
\dim Z=2d_Z,\qquad
\text{every nonempty fiber of }h_Z\text{ is pure of dimension }d_Z.
\]
Assume also that $X$ and $Y$ are rationally smooth, in particular their intersection complexes are shifted constant sheaves.

Suppose the comparison $(F,a)$ is equivariant for a smooth separated finite-type commutative group algebraic space $Q/B_X$ with connected fibers and homomorphisms
\[
P_X\xleftarrow{\,u\,}Q\xrightarrow{\,v\,}a^*P_Y.
\]
Explicitly, $F_a:X\to Y\times_{B_Y}B_X$, $x\mapsto(F(x),h_X(x))$, satisfies
$F_a(u(q)\cdot x)=v(q)\cdot F_a(x)$.
Write $\Ab(P_b)$ for the abelian quotient of a connected commutative algebraic group $P_b$, and set
\begin{equation}\label{eq:intro-rank}
r(b)=\dim\im\bigl(\Ab(Q_b)\longrightarrow\Ab(P_{X,b})\bigr),
\qquad \dim\{b\in B_X:r(b)\le c\}\le c\quad(c\ge0).
\end{equation}
The bound implies the $\delta$-regularity of $(X, P_X, B_X)$. If the map on abelian quotients is surjective, the bound is precisely $\delta$-regularity of $P_X/B_X$.

\begin{theorem}\label{thm:intro-comparison}\label{thm:intro-weak}
Under these hypotheses, the comparison $(F,a)$ is compatible with perverse truncation in the sense of Definition~\ref{def:weak}. In particular, $F^*$ preserves the perverse filtration.
\end{theorem}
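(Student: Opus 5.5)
The basic reduction is adjunction. Via the truncation triangle, the composite $a^*\ptau_{\le d_Y+k}K_Y\to a^*K_Y\xrightarrow{\beta_{F,a}}K_X\to\ptau_{>d_X+k}K_X$ is a morphism that must vanish, and it is adjoint to a map
\[
\ptau_{\le d_Y+k}K_Y\longrightarrow Ra_*\,\ptau_{>d_X+k}K_X .
\]
I would not work with a general $a$. Instead I would factor the comparison through the base change $Y_a:=Y\times_{B_Y}B_X$, writing $\beta_{F,a}$ as the composite
\[
a^*K_Y\xrightarrow{\ \text{bc}\ }Rh_{Y_a*}\Q_{Y_a}\xrightarrow{\ F_a^*\ }K_X .
\]
The first map is an isomorphism by proper base change. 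This reduces the problem to one statement: for every $k$, the morphism $a^*\ptau_{\le d_Y+k}K_Y\to K_X$ has zero composite to $\ptau_{>d_X+k}K_X$. By the decomposition theorem, $K_X$ and $K_Y$ are direct sums of shifted semisimple perverse sheaves. So it is enough to check that no simple summand $\IC_Z(L)[-i]$ of $a^*\ptau_{\le d_Y+k}K_Y$, after applying $\beta$, has a nonzero component onto a summand $\IC_{Z'}(L')[-j]$ of $K_X$ with $j>d_X+k$.

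To control these components I would use the support theorem for weak abelian fibrations in the form of Ngô strings. For rationally smooth total spaces, every summand of $K_X$ with support $Z'$ and perverse degree $d_X+j'$ occurs in a string, namely the summands $\IC_{Z'}(L'\otimes\wedge^{m}T)$ indexed by $m$. Here $T$ is the Tate module of the abelian part of $P_X$ over the generic point of $Z'$. The string starts in degree $\ge \operatorname{codim}Z'$, and the bound $\operatorname{codim}Z'\le\delta$ is the $\delta$-regularity. The central step is to use $Q$-equivariance, through $u$ and $v$, to show that $\beta_{F,a}$ intertwines the cap-product actions of $\wedge^\bullet T(Q)$ on both sides. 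The action on $K_X$ factors through $u$. The action on $a^*K_Y$ factors through $v$, and on abelian quotients it lands in the image of dimension $r(b)$.

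Consider a component between strings supported on $Z'$, and fix a generic point $b$ of $Z'$. Equivariance together with the string structure shows that the $\wedge^\bullet$ of the image $\im(\Ab(Q_b)\to\Ab(P_{X,b}))$ acts freely on the target string. The source, read in the grading of $X$, has top piece in degree $\le d_Y+k$, and after translating to $X$-normalization this becomes at most $d_X+k$ up to a shift by $\operatorname{codim}Z'$. That shift is absorbed exactly because $r(b)\ge \operatorname{codim}Z'$, which is the effective rank inequality \eqref{eq:intro-rank} with $c=\dim Z'$. Stalks at $b$ then show that a nonzero component into degree $>d_X+k$ would contradict the freeness of the $\wedge^{r(b)}$ action.

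The main obstacle is this last degree comparison between different supports. A summand of $a^*\ptau_{\le d_Y+k}K_Y$ can be nonperverse with its degrees shifted by $a^*$, and $\beta$ can map it onto a summand of $K_X$ with smaller support. To handle it I would work stalkwise at $b\in Z'$. I would compare $(a^*\ptau_{\le d_Y+k}K_Y)_b=(\ptau_{\le d_Y+k}K_Y)_{a(b)}$, which is concentrated in degrees $\le d_Y+k-\dim(\text{support})$, with the lowest stalk degree of the target string, which is $d_X+j-\dim Z'$. I would then use the rank count, in place of $\delta$, to close the gap. Once this is done, the statement $F^*P_kH^m(Y)\subseteq P_kH^m(X)$ follows by taking hypercohomology.
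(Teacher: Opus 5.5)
\textbf{There is a genuine gap, at exactly the step you flag as the main obstacle, and the stalkwise plan cannot close it.}

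First, the reduction to simple summands is not available on the source, because $a^*$ does not preserve semisimplicity. For example, pulling back $j_*L[1]$, with $L$ of monodromy $-1$ on $\mathbb G_m$, along $z\mapsto z^2$ gives the non-semisimple $j_!\Q[1]$.

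Second, and more seriously, the components that must be killed are largely classes in $\Ext^{e}(\IC_Z(L),\IC_{Z'}(L'))$ with $e>0$ and $Z\neq Z'$. These are the boundary contributions, and such classes are invisible on stalks: $\Hom(i_{0*}\Q,\Q_{\mathbb A^1}[2])\neq0$, yet this morphism induces zero on all stalk cohomology. What actually bounds $\Hom$ is the stalk range of the source against the \emph{costalk} range $\iota^!$ of the target on each stratum $S$, glued by localization triangles. Even that pure degree count fails once $t=\dim S\ge1$, because Verdier duality on $S$ costs $2t$ in the costalk bounds (Lemma~\ref{lem:weak-stalk-costalk}).

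Your numerical input is also misstated. The effective-rank inequality with $c=r(b)$ gives $r(b)\ge\dim Z'$ (in the paper, $r\ge t$ on each stratum), not $r(b)\ge\operatorname{codim}Z'$. It is precisely this dimension that has to be compensated.

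The $2t$ loss is recovered only through $\mathcal A$-linearity. The costalk modules $\bigoplus_j\mathcal H^v(\iota^!\pH^{d_X+j}K_X)_b$ are free over $\Lambda A_Q$ with $\dim A_Q=2r$. This is Verdier dual to Ng\^o's stalk freeness and holds at every point of every stratum, not only at generic points of supports. Hence a nonzero $\Lambda A_Q$-linear map into them hits the socle, $2r$ perverse degrees lower, and $r\ge t$ closes the gap.

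The components you isolate are not module maps, however. The decomposition-theorem splittings need not be stable under the cap-product action, which only respects the perverse filtration up to shift. In addition, the directions of $T(Q)$ complementary to $A_Q$ contribute higher Ext.

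For this reason the paper never splits. It first lifts $\beta$ to a filtered $\mathcal A$-module map $a^*\ptau_{\le\bullet}K_Y\to\ptau_{\le\bullet+d_X}K_X$. Via the Rees construction, it identifies the obstruction to lowering the shift $s$ by one with a class in the graded $E$-linear group $\Ext^{-s}_{E,\Delta+s}(U,V)$. Here $E=\Lambda T(Q)$ and $U,V$ are the sheared associated graded modules. It then proves $\Ext^e_{E,\delta}(U,V)=0$ for $e<\Delta+\delta$ using:
\begin{itemize}
\item the stalk/costalk bounds;
\item costalk freeness;
\item a Koszul spectral sequence in the complementary directions;
\item d\'evissage over strata.
\end{itemize}
Without an argument of this kind, your outline at best handles the same-support (local-system) components.
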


This is the weak-abelian-fibration case of Theorem~\ref{thm:common-action-weak}. If the rank bound is relaxed to $\dim\{r\le c\}\le c+\kappa$, the conclusion has target $\ptau_{\le d_X+k+\kappa}K_X$.
\begin{remark}
Theorem~\ref{thm:common-action-weak} requires $(X,P_X,B_X)$ to be a weak abelian fibration. The right hand side needs a smooth separated commutative group action, but neither polarizability nor affine  stabilizers. The weak-abelian-fibration formulation above is a convenient special case.
\end{remark}
\subsection{Application to Hitchin fibrations: multiplicativity}
Hitchin fibrations provide the main applications. Their perverse filtrations enter the $P=W$ conjecture of de Cataldo--Hausel--Migliorini \cite{DCHMPW}, which, under non-abelian Hodge theory, identifies the perverse filtration $P$ on the Dolbeault moduli space $M_{\mathrm{Dol}}$ with the weight filtration $W$ on the corresponding twisted character variety $M_{\mathrm B}$:
\[
 P_kH^m(M_{\mathrm{Dol}},\Q)
 =W_{2k}H^m(M_{\mathrm B},\Q)
 =W_{2k+1}H^m(M_{\mathrm B},\Q).
\]
For curves of genus at least two, the coprime $\GL_n$ conjecture was proved by Maulik--Shen \cite{MSPW} and Hausel--Mellit--Minets--Schiffmann \cite{HMMSSPW}; Maulik--Shen--Yin gave a Fourier-theoretic proof \cite[Theorem~0.4]{MSYFourier}. Maulik--Shen--Yin also established motivic multiplicativity for dualizable abelian fibrations satisfying Fourier vanishing, whose realization gives multiplicativity of perverse truncations \cite[Theorem~2.6(ii)--(iii)]{MSYFourier}; see also \cite[Theorem~3.2(b)]{MSYDualizable}. 

The multiplicativity of perverse truncations plays an important role in the proof of the $P=W$ conjecture. To see how the compatibility can be used to prove multiplicativity: the diagonal morphism is equivariant, and its pullback is cup-product; hence multiplicativity follows from the compatibility.

Let $(X,P,B)$ be a Ng\^o fibration with $d=\dim B$. The comparison
\[
(\Delta_X,\Delta_B):(X\to B)\longrightarrow(X\times X\to B\times B)
\]
is equivariant for $P$ acting diagonally on the product. The K\"unneth formula identifies the ``$\beta_{F,a}$'' with the cup product.

\begin{theorem}\label{thm:intro-multiplicativity}
For $K=Rh_*\Q_X$ and $r,s\in\Z$, cup-product factors as
\begin{equation}\label{eq:intro-mult}
\ptau_{\le r}K\otimes^{\mathbf L}\ptau_{\le s}K
\longrightarrow\ptau_{\le r+s-d}K\longrightarrow K.
\end{equation}
Consequently $P_iH^m(X,\Q)\cup P_jH^n(X,\Q)\subseteq P_{i+j}H^{m+n}(X,\Q)$.
This includes the classical $\GL_N$ Higgs moduli spaces, the twisted
traceless $\SL_N$ spaces $M_{\SL_N}^{\Lambda,0}(C,\varnothing)$,
and the full-flag strongly parabolic spaces, with the specified determinant and weights
where applicable, under
Assumption~\ref{assum:parabolic-application}.
\end{theorem}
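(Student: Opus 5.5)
We deduce Theorem~\ref{thm:intro-multiplicativity} from Theorem~\ref{thm:intro-comparison}, or more precisely from its general form Theorem~\ref{thm:common-action-weak}, applied to the diagonal comparison
\[
(\Delta_X,\Delta_B):(X\xrightarrow{h}B)\longrightarrow(X\times X\xrightarrow{h\times h}B\times B).
\]
We use $Q=P$ acting on $X$ by $u=\id$ and on $\Delta_B^*(P\times P)=P\times_B P$ by the diagonal $v$. The map $F_a:X\to (X\times X)\times_{B\times B}B=X\times_B X$ is the relative diagonal, which is visibly equivariant.

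\emph{Step 1: check the hypotheses.} $X\times X$ is rationally smooth of dimension $4d$ over the smooth base $B\times B$ of dimension $2d$. Its fibers $X_b\times X_{b'}$ are pure of dimension $2d$. The group $P\times P$ is smooth, separated, commutative, with connected fibers, and $(X\times X,P\times P,B\times B)$ inherits the weak abelian fibration axioms (affine stabilizers, polarizability of the Tate module) factorwise. Since $u=\id$, the map $\Ab(Q_b)\to\Ab(P_b)$ is the identity, so $r(b)=\dim\Ab(P_b)$. The rank bound \eqref{eq:intro-rank} is then exactly $\delta$-regularity of the Ng\^o fibration $(X,P,B)$. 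For the listed moduli spaces, rational smoothness and the Ng\^o fibration property come from \cite{DCFFM} and Assumption~\ref{assum:parabolic-application}.

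\emph{Step 2: identify the truncations.} By Künneth, $K_{X\times X}=K\boxtimes K$. Perverse t-structures are compatible with external products over $\C$: $\boxtimes$ is t-exact for perverse sheaves. Since $K$ is a finite direct sum of shifted perverse sheaves by the decomposition theorem, we get
\[
\textstyle\bigoplus_{r+s=n}\ptau_{\le r}K\boxtimes\ptau_{\le s}K\ \text{ surjects onto (indeed is split onto) }\ \ptau_{\le n}(K\boxtimes K).
\]
In particular $\ptau_{\le r}K\boxtimes\ptau_{\le s}K\to K\boxtimes K$ factors through $\ptau_{\le r+s}(K\boxtimes K)$.

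\emph{Step 3: apply the compatibility.} Theorem~\ref{thm:intro-comparison} with $d_Y=2d$, $d_X=d$ and $k=r+s-2d$ gives
\[
\Delta_B^*\ptau_{\le r+s}(K\boxtimes K)\to\ptau_{\le r+s-d}K,
\]
compatibly with $\beta=\Delta_B^*(K\boxtimes K)\to K$. Since $\Delta_B^*(A\boxtimes B')=A\otimes^{\mathbf L}B'$, the composite $\beta$ is the cup product. Composing with Step 2 yields \eqref{eq:intro-mult}. Taking hypercohomology, with $P_iH^m$ the image of $H^{m}(\ptau_{\le i+d}K)$, gives the filtered statement.

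\emph{Main obstacle.} I expect the main difficulty to be Step 2 together with the identification of $\beta_{\Delta_X,\Delta_B}$ with cup product. This requires a careful check that the Künneth isomorphism intertwines the adjunction map $a^*Rh_{Y*}\to Rh_{X*}F^*$ with the diagonal restriction. I would handle it by writing $\beta$ as base change followed by restriction along the relative diagonal $X\to X\times_B X$. The remaining verification, that the product is a weak abelian fibration in the sense needed for Theorem~\ref{thm:common-action-weak}, is routine.
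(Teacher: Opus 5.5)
Your proposal is correct and follows the paper's own proof of Theorem~\ref{thm:multiplicativity}. That proof applies Theorem~\ref{thm:common-action-weak} to the diagonal comparison with $Q=P$, $u=\id$, $v=\Delta_P$ and $k=r+s-2d$, uses $\ptau_{\le r}K\boxtimes\ptau_{\le s}K\in{}^pD^{\le r+s}(B\times B)$, and identifies $\beta_{\Delta_X,\Delta_B}$ with the cup product via proper K\"unneth. The only differences are cosmetic: right t-exactness of $\boxtimes$ alone gives your Step 2 without the decomposition theorem, and Theorem~\ref{thm:common-action-weak} needs only the source $(X,P,B)$, not the product, to be a weak abelian fibration.
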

The shift is determined by the comparison of bases: $\dim(B\times B)=2d$, whereas $\dim B=d$. Substituting $r=d+i$ and $s=d+j$ gives the normalized bound $d+i+j$.

\subsection{Proof Strategy}
The argument uses the weak abelian fibration structure, rational smoothness and the effective-rank inequality \eqref{eq:intro-rank}. For a Ng\^o fibration, the last condition follows from $\delta$-regularity when the common group surjects onto the abelian quotients of the source symmetry groups. For the application to Hitchin systems, the geometric hypotheses are collected in Assumption~\ref{assum:parabolic-application}.

The proof first uses the group action to make $\beta_{F,a}$ a morphism of modules of the homological algebras. The abelian part of the action on $X$ then supplies a free exterior-algebra factor. Combined with stalk and costalk bounds, this gives a global Ext vanishing range controlled by \eqref{eq:intro-rank} which removes the obstruction to the compatible factorization \eqref{eq:intro-weak}.

Section~\ref{sec:background} introduces comparison morphisms, perverse truncations and weak abelian fibrations. Section~\ref{weak:section} proves the compatibility criterion and its extensions. Section~\ref{sec:multiplicativity} gives the applications: deriving multiplicativity from the diagonal. 
\subsection{Acknowledgements}
The authors thank Tao Su, Dingxin Zhang, Zili Zhang, and Zhiwei Zheng for their interest in this work and for helpful discussions. 

The authors had been considering the compatibility of perverse truncations for morphisms between Ng\^o fibrations \cite{Ngo} before AI was involved in this work. The authors initially established the argument for the local system part of each intersection complex. AI assisted with developing the technical arguments for the boundary contributions, writing and revising the manuscript, and reviewing proofs. The authors take full responsibility for the content of the paper.
\section{Preliminaries}\label{sec:background}\label{sec:comparisons}
Cohomology has rational coefficients if not specified. We write $\Db(B,\Q)$ for the bounded constructible derived category, and $\ptau$ and $\pH$ for middle-perversity truncation and cohomology. The comparison morphisms and compatibility with perverse truncations are initially considered in the ordinary constructible derived categories.
The enhanced categories used in the proof are specified in Subsection~\ref{subsec:categorical-setup}.

\subsection{Comparisons and their induced morphisms}\label{subsec:comparison-data}
\begin{definition}\label{def:comparison}
A \emph{comparison} from $h_X:X\to B_X$ to $h_Y:Y\to B_Y$ is a pair of algebraic morphisms
\[
F:X\to Y,\qquad a:B_X\to B_Y,
\qquad h_Y\circ F=a\circ h_X.
\]
\end{definition}
We assume the morphisms $h_X,h_Y$ are projective and the bases are smooth. 
Set $Y_a=Y\times_{B_Y}B_X$, let $h_{Y,a}:Y_a\to B_X$ be its projection, and define $F_a:X\to Y_a$ by $F_a(x)=(F(x),h_X(x))$.
We have the following diagram:
\[
\begin{tikzcd}
    X\ar[r,"F_a"]\ar[rd,"h_X"]&Y_a\ar[d,"h_{Y_a}"]\ar[r]&Y\ar[d,"h_Y"]\\
    &B_X\ar[r,"a"]&B_Y
\end{tikzcd}
\]
where the right square is Cartesian. Proper base change gives the natural map
\begin{equation}\label{eq:actual-beta}
\beta_{F,a}:a^*K_Y
\xrightarrow{\ \sim\ }Rh_{Y,a*}\Q_{Y_a}
\longrightarrow Rh_{Y,a*}RF_{a*}\Q_X
=K_X .
\end{equation}
In other words, $\beta$ is the pushforward of the natural map $\Q_{Y_a}\rightarrow RF_{a*}\Q_X$ by $Rh_{Y_a*}$.

\subsection{Perverse truncations and compatibility}\label{subsec:normalized-comparison}
Put $d_X=\dim B_X$ and $d_Y=\dim B_Y$. We use the base-dimension normalization customary for the Hitchin perverse filtration, as in \cite[Section~1.1, (2)--(4)]{MSYCycles}, and define
\begin{equation}\label{eq:cohom-P}
P_kH^m(X,\Q)=\im\bigl(
H^m(B_X,\ptau_{\le d_X+k}K_X)\longrightarrow H^m(X,\Q)\bigr).
\end{equation}
For a smooth proper map, $\pH^{d_X+q}(K_X)=(R^qh_{X*}\Q_X)[d_X]$, which explains the normalization. Equivalently, if $\widehat K_X=K_X[d_X]$, then
\[
P_kH^m(X,\Q)=\im\bigl(H^{m-d_X}(B_X,\ptau_{\le k}\widehat K_X)\longrightarrow H^m(X,\Q)\bigr).
\]
Maulik--Shen--Yin\cite{MSYCycles} shift their direct image by $\dim X-d_h$, where $d_h$ is the relative dimension. In our dimension range $\dim X=2d_X$ and $d_h=d_X$, this is exactly $\widehat K_X$. In particular the absolute truncation bound in their Conjecture~6.1 is $r+s-(\dim X-d_h)=r+s-d_X$, as in \eqref{eq:intro-mult}. A convention using $Rh_{X*}\IC_X=K_X[2d_X]$ instead has the centered perverse index $k-d_X$; it is not the normalization used here. Arbitrary inverse image does not commute with perverse truncation, so it is important to retain the order of operations in the following definition.

\begin{definition}\label{def:weak}
The comparison $(F,a)$ is \emph{compatible with perverse truncation} if for any $k\in\Z$, the composite has the factorization
\begin{equation}\label{eq:compatibility}
\begin{tikzcd}
    a^*\ptau_{\le d_Y+k}K_Y\ar[r]\ar[rd, dotted]&a^*K_Y\ar[r,"\beta_{F,a}"]&K_X\\
    &\ptau_{\le d_X+k}K_X\ar[ru]
\end{tikzcd}
\end{equation}
The comparison morphisms and compatibility with perverse truncations are formulated in the ordinary bounded
constructible derived category.
\end{definition}
\begin{remark}
    If $h_X$ is projective and $X$ is a rational homology manifold, the decomposition theorem implies that each canonical morphism $\ptau_{\le m}K_X\to K_X$ is a split monomorphism in the derived category. Consequently, a factorization through this morphism, when it exists, is unique. The factorizations in \eqref{eq:compatibility} therefore commute automatically with the truncation transition maps.
\end{remark}
Applying hypercohomology to \eqref{eq:compatibility}, immediately gives
\[
F^*P_kH^m(Y,\Q)\subseteq P_kH^m(X,\Q).
\]
For comparisons $(F,a):(X\to B_X)\to(Y\to B_Y)$ and
$(G,b):(Y\to B_Y)\to(Z\to B_Z)$, we have
\[
\beta_{G\circ F,b\circ a}
=\beta_{F,a}\circ a^*\beta_{G,b}.
\]
If both comparisons are compatible, their compatible factorizations compose as
\begin{equation}\label{eq:comparison-composition}
(b\circ a)^*\ptau_{\le d_Z+k}K_Z
=a^*b^*\ptau_{\le d_Z+k}K_Z\longrightarrow a^*\ptau_{\le d_Y+k}K_Y
\longrightarrow\ptau_{\le d_X+k}K_X.
\end{equation}
Thus compatibility of perverse truncation is preserved by composition, with no additional normalized shift.

\subsection{Weak abelian fibrations}\label{subsec:weak-abelian}In general, the compatibility of perverse truncation cannot be expected. We here introduce the \emph{weak abelian fibrations} which play an important role in the study of Hitchin systems.  We follow \cite[Section~1.1]{MSChi}. Let $h:M\to B$ be proper, with $M$ quasiprojective, and let $g:P\to B$ be a smooth commutative group scheme with geometrically connected fibers, acting on $M$ over $B$. All the group schemes used here are of finite type.

\begin{definition}\label{def:weak-abelian}
The triple $(M,P,B)$ is a \emph{weak abelian fibration of relative dimension $s$} if
\begin{enumerate}
\item every fiber of $P\to B$ has dimension $s$, and $M$ is pure-dimensional with $\dim M=s+\dim B$;
\item the action of $P$ on $M$ has affine stabilizers;
\item the relative Tate module of $P/B$ is polarizable.
\end{enumerate}
\end{definition}
The definition in \cite[Definition~4.36]{DCFFM} permits disconnected group fibers and imposes polarizability on the neutral component. Passing to that component gives the connected-fiber convention used here.

More precisely, the Tate module can be written
\[
T_{\overline{\Q}_\ell}(P)
=\mathcal H^{-1}\bigl(Rg_!\overline{\Q}_\ell[2s](s)\bigr).
\]
If we consider the rational coefficients, its stalk is $H_1(P_b,\Q)$. Following \cite[Section~1.3]{MSChi}, polarizability means that an \emph{\'etale-local} bilinear pairing descends, on each fiber, to a nondegenerate pairing on the Tate module of the abelian quotient. Equivalently, its fiberwise radical is the Tate module of the connected affine part. 

For a point $b$, by Chevalley's theorem,
\[
1\longrightarrow P_b^{\mathrm{aff}}\longrightarrow P_b
\longrightarrow\Ab(P_b)\longrightarrow1,
\qquad \delta(b)=\dim P_b^{\mathrm{aff}}.
\]
The group scheme is \emph{$\delta$-regular} if
\begin{equation}\label{eq:bg-delta}
\operatorname{codim}_B\{b:\delta(b)\ge m\}\ge m
\qquad(m\ge0).
\end{equation}
When $s=\dim B=d$, the identity $\dim\Ab(P_b)=d-\delta(b)$ rewrites this as
\[
\dim\{b:\dim\Ab(P_b)\le c\}\le c\qquad(c\ge0).
\]
Definition~\ref{def:weak-abelian} fixes $\dim M=s+\dim B$ and already requires $h$ to be proper. The applications impose the further conditions $s=\dim B$, smoothness of $B$, rational smoothness of $M$, projectivity of $h$, pure fiber dimension $s$, and $\delta$-regularity. These extra conditions are stated separately whenever they are needed.

A pure-dimensional variety $M$ is rationally smooth, or a rational homology manifold, when $\Q_M[\dim M]\cong\IC_M$. This allows us to apply the decomposition theorem to the constant direct images carrying the natural pullback.
\section{Compatibility of perverse truncation}\label{weak:section}
\label{subsec:rhm-criterion}
Motivated by \emph{weak abelian fibrations}, we consider comparisons with a common action by a commutative group space.
\begin{definition}
Let $(F,a):(X\to B_X)\to(Y\to B_Y)$ be a comparison. We say it is an \emph{equivariant comparison}, if additionally, there is a smooth separated finite-type commutative group algebraic space $Q/B_X$ with connected fibers and homomorphisms $u:Q\to P_X$ and $v:Q\to a^*P_Y$ such that:
\[
F_a\bigl(u(q)\cdot x\bigr)=v(q)\cdot F_a(x).
\]
\end{definition}
Throughout this section, we make the following geometric assumptions on the comparisons

\begin{assumption}\label{assum:geometry of comparisons}
    For notational convenience, we denote  $Z=X,Y$. 
\begin{enumerate}
    \item $Z$ is a pure-dimensional rational homology manifold (RHM).
    \item $d_Z=\dim B_Z, \dim Z=2d_Z, \dim h_Z^{-1}(b)=d_Z$, for every nonempty fiber, which is required to be pure-dimensional.
    \item We have $P_Z/B_Z$, smooth separated finite-type commutative group scheme with connected fibers, acting on $Z$.
    \item $(X,P_X, B_X)$ is a weak abelian fibration, i.e., the relative Tate module $T_{\bar{\mathbb Q}_\ell}(P_X)$ is \'etale-locally polarizable and the $P_X$-stabilizers are affine.
\end{enumerate}
\end{assumption}
We then need the following numerical assumption on \emph{the equivariant comparison}.
\begin{assumption}\label{assum:effective-rank inequality}
Put
\[
 r(b)=\dim\im(\Ab Q_b\longrightarrow\Ab P_{X,b}).
\]
We call the following:
\[
\dim\{b\in B_X:r(b)\le c\}\le c\quad(c\ge0)
\]
effective-rank inequality.
\end{assumption}
\begin{remark}
Suppose $P_X/B_X$ has relative dimension $d_X$, and put $A(b)=\dim\Ab(P_{X,b})$. Since $r(b)\le A(b)$, the effective-rank inequality implies
\[
\dim\{b:A(b)\le c\}\le\dim\{b:r(b)\le c\}\le c,
\]
which is precisely $\delta$-regularity of $P_X/B_X$. 
\end{remark}

\begin{theorem}\label{thm:common-action-weak}
Under these hypotheses, the comparison $(F,a)$ is compatible with perverse truncation. More precisely, for every $k\in\mathbb Z$, the natural composite
\[
a^*(\ptau_{\le d_Y+k}K_Y)
\longrightarrow a^*K_Y
\xrightarrow{\beta_{F,a}}K_X
\]
factors as
\begin{equation}\label{eq:rhm-weak-towers}
a^*(\ptau_{\le d_Y+k}K_Y)
\longrightarrow\ptau_{\le d_X+k}K_X
\longrightarrow K_X.
\end{equation}
Consequently, $F^*P_kH^m(Y,\mathbb Q)\subseteq P_kH^m(X,\mathbb Q)$.
\end{theorem}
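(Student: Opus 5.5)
The factorization question will be reduced to a vanishing statement. Write $A=a^*(\ptau_{\le d_Y+k}K_Y)$ and consider the distinguished triangle
\[
\ptau_{\le d_X+k}K_X\to K_X\to \ptau_{> d_X+k}K_X\xrightarrow{+1}.
\]
The composite $A\to a^*K_Y\to K_X$ factors through $\ptau_{\le d_X+k}K_X$ exactly when its image in $\Hom(A,\ptau_{>d_X+k}K_X)$ vanishes. We do not expect the whole group to vanish, because $a^*$ does not respect perversity. The plan is therefore to show that the specific class vanishes, by exploiting the $Q$-action.

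Since $X$ is an RHM and $h_X$ is projective, the decomposition theorem splits $K_X$ as $\bigoplus_i \pH^i(K_X)[-i]$, and each summand is a sum of shifted IC complexes. The support theorem for weak abelian fibrations (Ngô; Maulik--Shen for RHM total spaces) then describes these summands. Every support $Z\subset B_X$ has $\operatorname{codim} Z\le \delta_Z$, and on $Z$ the summand contains, as a free tensor factor, the exterior algebra $\wedge^\bullet T(\Ab P_X)$ on the Tate module of the abelian part.

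The central structural step is to make $\beta_{F,a}$ a morphism of modules over homology algebras. The action map $Q\times_{B_X}X\to X$ makes $K_X$ a module over $Rg_{Q!}\Q$, that is, over $\wedge^\bullet T(Q)$, through $u$. In the same way the action of $a^*P_Y$ on $Y_a$ makes $a^*K_Y$ a module through $v$, and the equivariance identity $F_a(u(q)x)=v(q)F_a(x)$ shows that $\beta_{F,a}$ is $\wedge^\bullet T(Q)$-linear. To carry out this step we would pass to an enhanced (dg or $\infty$-categorical) setting, so that the module structures exist at the chain level rather than only up to homotopy. Over a stratum where $r(b)=r$, the image $\wedge^\bullet$ of the $r$-dimensional quotient $T$ acts freely on every summand of $\ptau_{>d_X+k}K_X$ supported there, because polarizability makes the exterior factor free. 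For a morphism of free modules that is compatible with the exterior action, the obstruction class is then determined by its restriction to the top or bottom graded piece. Its relevant $\Hom$ is a degree-shifted version of $\Hom(A,\IC_Z(L)[-j])$ with $j$ raised by the $r$ degrees that the free exterior factor contributes.

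This is where the effective-rank inequality enters. By construction $A$ lies in $a^*({}^pD^{\le d_Y+k})$, and on the relevant strata its stalks are bounded above because $a^*$ is $t$-right exact up to fiber dimension. On the other side, $\IC_Z(L)$ in perverse degree $>d_X+k$ has costalk bounds along $Z$, and $\dim Z$ is controlled by $\dim\{r\le c\}\le c$. The stalk estimate for $A$ and the costalk estimate for the IC summand together with the extra $r$ from the free factor should push the $\Hom$ into a negative degree, so it vanishes. The main obstacle is this estimate. We have to check that the degree gain $r(b)$ from freeness exactly compensates the failure of $a^*$ to be perverse $t$-exact. That failure can reach $\dim B_X-\dim Z$ on small strata. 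The strategy is to stratify $B_X$ by $r$, work one support $Z$ at a time using an open–closed dévissage, and apply the inequality in the form $\dim Z\le r_Z$.

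Once the factorization holds, applying $H^m(B_X,-)$ to the factored diagram gives $F^*P_kH^m(Y,\Q)\subseteq P_kH^m(X,\Q)$.
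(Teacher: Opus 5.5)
You have the right ingredients: an enhanced $\mathcal A$-module structure making $\beta_{F,a}$ linear, Ng\^o's exterior freeness, stalk/costalk bounds on strata where $r$ is constant, $t\le r$, and open--closed d\'evissage. But the central step is not supported and does not work as stated. That step is your claim that for an $\mathcal A$-linear map ``the obstruction class is determined by its restriction to the top or bottom graded piece'' and becomes a shifted $\Hom(A,\IC_Z(L)[-j])$. Your obstruction is a single morphism $A\to\ptau_{>d_X+k}K_X$ in the unfiltered category $D(\mathcal A)$, and there $T(Q)[1]$ lowers perverse degree. The diagonal component $T(Q)[1]\otimes\pH^jK_X[-j]\to\pH^jK_X[-j]$ lies in $\Hom(T(Q)\otimes\pH^jK_X,\pH^jK_X[-1])=0$. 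Consequences:
\begin{itemize}
\item the perverse pieces $\pH^jK_X[-j]$, as subquotients in $D(\mathcal A)$, carry only the augmentation action;
\item the decomposition-theorem splitting is not a splitting of $\mathcal A$-modules;
\item the exterior freeness is carried entirely by the off-diagonal maps $T(Q)\otimes\pH^j\to\pH^{j-1}$.
\end{itemize}
D\'evissage in $D(\mathcal A)$ along these pieces therefore gives no degree gain over the plain estimate, which you rightly say is insufficient. The missing idea is a \emph{filtered} $\mathcal A$-linear lift. Only on the associated graded of the perverse filtration, over the sheared algebra $E=\Lambda T(Q)$ coming from $F_\ell\mathcal A=\tau_{\le\ell}\mathcal A$, do the objects become honest graded modules to which a socle argument applies.

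The paper starts from the trivial lift $F^Y_\bullet\to F^X_{\bullet+d_X}$ (Lemma~\ref{lem:weak-initial-lift}). It then lowers the filtered degree one step at a time through the Rees construction (Lemma~\ref{lem:weak-rees-lowering}); the obstruction at degree $s$ lies in $\Ext^{-s}_{E,\Delta+s}(U,V)$. Near the end these are \emph{positive} Ext degrees: for the diagonal, $\Delta=d$ and the last groups are $\Ext^{d-1}_{E,1},\Ext^{d-2}_{E,2},\ldots$. So the vanishing cannot come from pushing a Hom into negative degree. It comes from trading cohomological degree for internal degree, in three steps:
\begin{itemize}
\item freeness of the \emph{costalks} $\bigoplus_j\mathcal H^v(\iota^!\mathcal R_j)_b$ over $\Lambda A_Q$, with $\dim A_Q=2r$, forces nonzero linear maps to hit the socle (Lemma~\ref{lem:weak-degree-bound});
\item free modules over the Frobenius algebra $\Lambda A_Q$ are injective, so higher $\Lambda A_Q$-Ext vanishes;
\item the Koszul resolution over the complementary factor $\Lambda R_0$ costs one internal degree per Ext degree (Proposition~\ref{prop:weak-global-ext}).
\end{itemize}
Your numerics also need correcting. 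The loss is $2t$ from costalks on a $t$-dimensional stratum, balanced against a socle gain of $2r$. The source-side bound actually needed is $i\ge d_Y+u$ whenever $\mathcal H^u(a^*\pH^{d_Y+i}K_Y)\neq0$, which uses relative hard Lefschetz on $Y$, not only right $t$-exactness of $a^*$.
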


The proof is organized into four parts. The geometric input is isolated in Proposition~\ref{prop:weak-costalk-free}; the numerical calculation is contained in Lemma~\ref{lem:weak-degree-bound} and Proposition~\ref{prop:weak-global-ext}. The final two lemmas improve the filtered degree and descend the resulting truncation factorizations to rational coefficients. The proof of Theorem~\ref{thm:common-action-weak} appears after these results.

\subsection{Notation and the initial filtered comparison}
\label{subsec:categorical-setup}

Throughout the proof we retain the geometric hypotheses of Theorem~\ref{thm:common-action-weak}. The intermediate constructions have coefficients in $\mathbb Q_\ell$; compatibility over $\mathbb Q$ is obtained in Lemma~\ref{lem:weak-rational-descent}.

For each base $B$, let $\mathscr D(B)$ denote the ambient presentable symmetric monoidal stable $\infty$-category
in the pro-\'etale realization of sheaf complexes used below. Its tensor product preserves colimits separately in each
variable. We use the enhanced six operations of \cite{LZ} and the realization of bounded constructible complexes in
\cite[Lemma~6.5.5 and Proposition~6.8.14]{BS}. Writing $\mathscr D_c^b(B)$ for the bounded constructible subcategory, we have
\[
h\mathscr D_c^b(B)\simeq\Db(B,\mathbb Q_\ell),
\]
where $h$ denotes the homotopy category.

Algebras are unital associative algebra objects in $\mathscr D(B)$, and module structures and module morphisms
are understood in this enhanced setting. For an algebra object $\mathcal A$, we write
\[
D(\mathcal A)
=
h\operatorname{Mod}_{\mathcal A}(\mathscr D(B)).
\]
The ambient categories accommodate module constructions, direct sums, and Rees objects; bounded constructibility
is imposed on the geometric complexes under consideration.

Compatibility in Definition~\ref{def:weak} is a property in the ordinary constructible derived category.
The enhanced module structures introduced here are used to prove that property.

For a smooth morphism $f$ of relative dimension $q$, denote its smooth trace map by
\[
 \operatorname{Tr}_f:
 Rf_!\mathbb{Q}_\ell[2q](q)\longrightarrow\mathbb{Q}_\ell.
\]
Let $\pi:Q\to B_X$ have relative dimension $q$, and set $\mathcal A=R\pi_!\mathbb{Q}_\ell[2q](q)$. Consider the following Cartesian diagram:
\[
\begin{tikzcd}
    Q\times_{B_X}Q\ar[r,"p"]\ar[d,"m"]&Q\ar[d,"\pi"]\\
    Q\ar[r,"\pi"]&B_X
\end{tikzcd}
\]
By the base change for $R\pi_!$, we have:
\[
\pi^*R\pi_!\Q_\ell[2q](q)\simeq Rm_!\Q_\ell[2q](q).
\]
Hence, the group multiplication $m:Q\times_{B_X}Q\to Q$ induces the multiplication on $\mathcal A$:
\[
 \mathcal A\otimes\mathcal A\simeq R\pi_!(\Q_{\ell}[2q](q)\otimes \pi^*R\pi_!\Q_\ell[2q](q))
 \simeq R\pi_!Rm_!\mathbb{Q}_\ell[4q](2q)
 \xrightarrow{\,R\pi_!(\operatorname{Tr}_m)[2q](q)\,}
 \mathcal A,
\]
where the first map is the K\"unneth isomorphism. The unit of the algebra is induced by the identity section $e:B_X\to Q$.

The homomorphisms $u:Q\to P_X$ and $v:Q\to a^*P_Y$ induce actions of $Q$ on $X$ and $Y_{a}$, respectively.
For $(Z,h_Z)=(X,h_X)$ or $(Y_{a},h_{Y_a})$, write $\alpha_Z:Q\times_{B_X}Z\to Z$ for this action and
$p_Z:Q\times_{B_X}Z\to Z$ for projection.

The isomorphism $(g,z)\mapsto(g,gz)$ identifies $\alpha_Z$ with
$p_Z$, so it is smooth of relative dimension $q$ and we have the following Cartesian diagram:
\[
\begin{tikzcd}
    Q\times_{B_X}Z\ar[r,"p"]\ar[d,"\alpha_Z"]&Q\ar[d,"\pi"]\\
    Z\ar[r,"h_Z"]&B_X
\end{tikzcd}
\]
Similarly, the action by $Q$ gives a natural module structure:
\begin{equation}\label{eq:module structure}
c_Z: \mathcal A\otimes Rh_{Z*}\mathbb{Q}_{\ell,Z}\simeq Rh_{Z!}(h_Z^*\mathcal A\otimes\Q_{\ell,Z})\simeq Rh_{Z!}R\alpha_{Z!}\mathbb{Q}_{\ell,Q\times_{B_X}Z}[2q](q)\xrightarrow{\,Rh_{Z!}(\operatorname{Tr}_{\alpha_Z})\,}Rh_{Z*}\mathbb{Q}_{\ell,Z}.
\end{equation}
These are the cap-product actions on $K_X$ and $a^*K_Y$.
\begin{lemma}\label{lem:weak-actual-action}
These operations make $\mathcal A$ a bounded constructible
algebra and $a^*K_Y,K_X$ into $\mathcal A$-modules.
The natural morphism $\beta:a^*K_Y\to K_X$ is a morphism
of these $\mathcal A$-modules.
\end{lemma}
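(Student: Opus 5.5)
The plan is to build everything at the level of $\infty$-categorical correspondences, so that the coherences come from functoriality of the enhanced six operations of \cite{LZ} rather than being checked by hand.

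\textbf{Algebra structure on $\mathcal A$.} The relative homology $\mathcal A=R\pi_!\mathbb{Q}_\ell[2q](q)$ is the image of the commutative monoid $Q/B_X$ under the symmetric monoidal functor
\[
Z\longmapsto Rh_{Z!}\,\omega_{Z/B_X}=Rh_{Z!}h_Z^!\mathbb{Q}_\ell ,
\]
defined on the category of smooth $B_X$-spaces with proper-pushforward functoriality. For smooth $Q$ one has $\pi^!\mathbb{Q}_\ell\simeq\mathbb{Q}_\ell[2q](q)$, and under this identification the counit $Rm_!m^!\to\id$ is exactly $\operatorname{Tr}_m$. Monoidality of this functor is the Künneth formula for $\times_{B_X}$. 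It therefore carries the $E_\infty$ monoid $Q$, with multiplication $m$ and unit $e$, to an algebra object. Concretely, associativity reduces to the equality $m\circ(m\times\id)=m\circ(\id\times m)$ together with compatibility of trace maps under composition, $\operatorname{Tr}_{f\circ g}=\operatorname{Tr}_f\circ Rf_!\operatorname{Tr}_g$. The unit axiom follows from $m\circ(e\times\id)=\id$.

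\textbf{Bounded constructibility.} Because $\pi$ is finite type and separated, $R\pi_!$ preserves constructible complexes. Hence $\mathcal A$ lies in $\mathscr D_c^b$.

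\textbf{Module structures.} Apply the same construction to the action map $\alpha_Z$, for $Z=X$ or $Z=Y_a$. Since $h_Z$ is proper, $Rh_{Z*}=Rh_{Z!}$. The map $c_Z$ is then the pushforward of $\operatorname{Tr}_{\alpha_Z}$ along $Q\times_{B_X}Z\to Z$, combined with the Künneth identification. The module axioms follow from the action axioms $\alpha\circ(m\times\id)=\alpha\circ(\id\times\alpha)$ and $\alpha\circ(e\times\id)=\id$, again using compatibility of traces with composition and base change. To get these coherently rather than up to homotopy, I would present $Z$ as a module over $Q$ in the correspondence category (bar construction/simplicial object $Q^{\times n}\times Z$) and apply the lax monoidal functor. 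For $a^*K_Y$, base change identifies it with $Rh_{Y_a*}\mathbb{Q}_\ell$, and $Q$ acts on $Y_a$ through $v$.

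\textbf{$\beta$ is a module map.} By \eqref{eq:actual-beta}, $\beta=Rh_{Y_a*}(\mathbb{Q}_{Y_a}\to RF_{a*}\mathbb{Q}_X)$, the unit of adjunction for the proper map $F_a$. Equivariance gives the commutative square
\[
\alpha_{Y_a}\circ(\id\times F_a)=F_a\circ\alpha_X .
\]
This square is Cartesian up to the isomorphism $(g,z)\mapsto(g,gz)$, since both $\alpha$'s are identified with projections. Smooth base change then shows that $\operatorname{Tr}_{\alpha_{Y_a}}$ and $\operatorname{Tr}_{\alpha_X}$ are intertwined by the adjunction units for $F_a$ and $\id\times F_a$, using compatibility of the trace with base change. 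Pushing forward to $B_X$ gives the commutativity $\beta\circ c_{Y_a}=c_X\circ(\id\otimes\beta)$. Upgrading this to a morphism in $\operatorname{Mod}_{\mathcal A}$ means functoriality of the construction in $Q$-equivariant proper maps, i.e.\ the bar-construction argument applied to $F_a$.

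\textbf{Main obstacle.} The hardest step is producing these $\infty$-categorical coherences rather than just homotopy-level commutativity. What I would try first is to invoke the symmetric monoidal functor from the correspondence category in \cite{LZ} to $\mathscr D$, restricted to smooth and proper legs, where the traces are encoded by the $!$-pullback along smooth maps. I would then transport the monoid/module structures through that functor.
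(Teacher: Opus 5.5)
Your proposal is correct and follows essentially the same route as the paper. You get constructibility from $R\pi_!$, build the algebra and module structures from smooth traces (deriving associativity from $\operatorname{Tr}_{f\circ g}=\operatorname{Tr}_f\circ Rf_!\operatorname{Tr}_g$, where the paper cites Ng\^o and Maulik--Shen), and get $\mathcal A$-linearity of $\beta$ from base-change compatibility of $\operatorname{Tr}_{\alpha}$ on the Cartesian square of action maps over $F_a$. Your bar construction for the higher coherences is the same, equally sketched, device as the paper's appeal to the iterated identities $F_a((g_1\cdots g_n)x)=(g_1\cdots g_n)F_a(x)$ inside the enhanced formalism of \cite{LZ}.
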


\begin{proof}
The six-functor formalism of
\cite[Theorems~7.2.6--7.2.7]{LZ} applies to the
separated algebraic space $Q$.
Since $\pi$ is of finite presentation,
\cite[Proposition~7.4.1]{LZ} implies that $\mathcal A$
is constructible. For a geometric point $b$ of $B_X$, the base-change theorem
for $R\pi_!$ identifies $\mathcal A_b$ with
$R\Gamma_c(Q_b,\mathbb{Q}_\ell)[2q](q)$
\cite[Theorem~7.2.7(2)]{LZ}.
Since $Q_b$ has dimension $q$, its compactly supported
cohomology vanishes outside $[0,2q]$.
Thus $\mathcal A$ is concentrated in degrees $[-2q,0]$.

Since we use the cap-product construction of
\cite[Sections~7.4.2--7.4.3]{Ngo} here, its associativity
for equivariant coefficients is proved in
\cite[Proposition~1.4]{MSChi}. It remains to show that the natural morphism $\beta:a^*K_Y\to K_X$ respects these module structures. In other words,
we need to show the commutativity of the following diagram:
\[
\begin{tikzcd}
    \mathcal A\otimes a^*K_Y\ar[r,"\simeq"]\ar[d]&\mathcal A\otimes Rh_{Y_a*}\Q_{\ell,Y_a}\ar[r,"1\otimes\beta"]\ar[d,"c_{Y_a}"]&\mathcal A\otimes K_X\ar[d,"c_X"]\\
    a^*K_Y\ar[r,"\simeq"]&K_{Y_a}:=Rh_{Y_a*}\Q_{\ell,Y_a}\ar[r,"\beta"]&K_X
\end{tikzcd}
\]
By the definition of the module structure, $c_Z$ can be decomposed as follows, see \eqref{eq:module structure}, we have:
\[
\begin{tikzcd}
    \mathcal A\otimes Rh_{Y_a*}\Q_{\ell,Y_a}\ar[r,"1\otimes\beta"]\ar[d,"\simeq"]&\mathcal A\otimes K_X\ar[d,"\simeq"]\\
    Rh_{Y_a*}(h_{Y_a}^*R\pi_!\Q_{\ell,Q}[2q](q))\ar[d,"\simeq"]&Rh_{X*}(h_X^*R\pi_!\Q_{\ell,Q}[2q](q))\ar[d,"\simeq"]\\
     Rh_{Y_a*}(R\alpha_{Y_a!}\Q_{\ell}[2q](q))\ar[d,"Rh_{Y_a*}\operatorname{Tr}"]\ar[r,dotted]&Rh_{X*}(R\alpha_{X!}\Q_{\ell}[2q](q))\ar[d,"Rh_{X*}\operatorname{Tr}"]\\
    K_{Y_a}:=Rh_{Y_a*}\Q_{\ell,Y_a}\ar[r,"\beta"]&K_X
\end{tikzcd}
\]
Hence, it suffices to show the commutativity of the lowest square which follows from the functoriality of the trace map. More precisely, apply the base-change isomorphism to the Cartesian square of action morphisms:
\[
\begin{tikzcd}
    Q\times_{B_X}X\ar[d,"\alpha_X"]\ar[r]&Q\times_{B_X}Y_{a}\ar[d,"\alpha_{Y_{a}}"]\\
    X\ar[r,"F_a"]&Y_{a}
\end{tikzcd}
\]
we obtain
\[
 F_a^*R\alpha_{Y_{a}!}\mathbb{Q}_\ell[2q](q)
 \simeq R\alpha_{X!}\mathbb{Q}_\ell[2q](q),
\]
compatibility of smooth trace with base change gives $F_a^*(\operatorname{Tr}_{\alpha_{Y_{a}}})
 =\operatorname{Tr}_{\alpha_X}$;
see \cite[Remark~4.1.6(2), Theorem~7.2.6]{LZ}. 

The $Q$-equivariance of $F_a$ gives
\[
 F_a((g_1\cdots g_n)x)
 =(g_1\cdots g_n)F_a(x)
 \qquad(n\ge1).
\]
These identities are compatible with multiplying adjacent $g_i$ and inserting the identity of $Q$. Applying the enhanced trace and base-change formalism of \cite[Remark~4.1.6, Theorems~7.2.6--7.2.7]{LZ} therefore makes $\beta$ a morphism of $\mathcal A$-modules in the stable $\infty$-category.

\end{proof}

\begin{lemma}\label{lem:weak-adams-algebra}
The homology algebra of Lemma~\ref{lem:weak-actual-action}
admits a natural equivalence of algebra objects
\[
 \mathcal A\simeq\bigoplus_{k\ge0}\Lambda^kT(Q)[k]
\]
in the symmetric monoidal stable $\infty$-category,
functorial in $Q$.
\end{lemma}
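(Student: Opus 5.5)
We prove that $\mathcal A=R\pi_!\mathbb{Q}_\ell[2q](q)$ is formal and free as a graded-commutative algebra on its degree $-1$ part $T(Q)=\mathcal H^{-1}(\mathcal A)$. The first step is to compute the cohomology sheaves. For a geometric point $b$, base change gives $\mathcal A_b\simeq R\Gamma_c(Q_b)[2q](q)$. By Poincaré duality on the smooth connected group $Q_b$ of dimension $q$, this is dual to $R\Gamma(Q_b)$, and the Pontryagin product dualizes to the cup coproduct. Since $Q_b$ is a connected commutative algebraic group, $H^*(Q_b)$ is the exterior algebra on $H^1(Q_b)$: Chevalley's sequence reduces this to tori and abelian varieties, together with the Leray--Hirsch degeneration for the torus bundle. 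Hence the stalk homology, with the Pontryagin product, is $\Lambda^\bullet H_1(Q_b)$. Globally, the $Q$-action has no monodromy on its own cohomology beyond that of $T(Q)$, and the algebra map $\Lambda^k\mathcal H^{-1}(\mathcal A)\to\mathcal H^{-k}(\mathcal A)$ is therefore an isomorphism of sheaves, since it is one on stalks.

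The second step is to split $\mathcal A$ canonically into the direct sum of its shifted cohomology sheaves. Following Ngô's device, we use the Adams operations. The multiplication-by-$N$ map $[N]:Q\to Q$ is a group homomorphism, and $R\pi_![N]$ induces an algebra endomorphism $\psi^N$ of $\mathcal A$. On $\mathcal H^{-k}=\Lambda^kT(Q)$ this endomorphism acts by $N^k$. Fixing $N\ge2$, the eigenvalues $N^0,\dots,N^{2q}$ are distinct. Because $\mathcal A$ is bounded and its $\mathbb{Q}_\ell$-linear endomorphism algebra is finite-dimensional in the relevant range, we can take the polynomial idempotents $e_k=\prod_{j\ne k}(\psi^N-N^j)/(N^k-N^j)$. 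These are built by composition and addition in the stable $\infty$-category, so they give an idempotent decomposition $\mathcal A\simeq\bigoplus_k\mathcal A_{(k)}$, where each $\mathcal A_{(k)}$ is the generalized $N^k$-eigenobject. We must check that $\psi^N$ acting on the whole complex, and not only on its cohomology, has these polynomial identities. To do so we first pass to the filtration by truncations: each graded piece is killed by $(\psi^N-N^k)$, so some power of the product vanishes, and a Jordan-type argument then yields the idempotents. Comparing cohomology gives $\mathcal A_{(k)}\simeq\Lambda^kT(Q)[k]$.

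The final step, which we expect to be the main obstacle, is to show that the decomposition is one of algebra objects, not merely of objects. Because $\psi^N$ is an $E_1$-algebra map and $\psi^N\psi^M=\psi^{NM}$, multiplication sends $\mathcal A_{(i)}\otimes\mathcal A_{(j)}$ into $\mathcal A_{(i+j)}$. The weight grading is thus multiplicative, which gives a graded algebra structure on $\bigoplus_k\mathcal A_{(k)}$. We then need an equivalence to the free exterior algebra on $\mathcal A_{(1)}=T(Q)[1]$. We try to obtain it from the map $\mathcal A_{(1)}\to\mathcal A$ using freeness. Commutativity of the group makes $\mathcal A$ an $E_\infty$-algebra, and since $\mathbb{Q}_\ell$ has characteristic zero, the free $E_\infty$-algebra on $T(Q)[1]$ is $\bigoplus\mathrm{Sym}^k(T(Q)[1])=\bigoplus\Lambda^kT(Q)[k]$. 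This gives an $E_\infty$-map, which is an equivalence by the first step. The remaining worry is that the weight-graded structure leaves no room for higher $A_\infty$ corrections. Here the weight argument helps: higher operations $m_n$ preserve weight but shift cohomological degree, and on pure pieces $\Lambda^kT[k]$, where weight equals minus degree, they must vanish. Functoriality in $Q$ follows because homomorphisms commute with $[N]$.
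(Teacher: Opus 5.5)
Your proposal is correct and follows essentially the paper's route: you split $\mathcal A$ by the Adams operation $[N]_*$, isolate the weight-one summand $T(Q)[1]\to\mathcal A$, extend it via $\operatorname{Sym}(T(Q)[1])\simeq\bigoplus_k\Lambda^kT(Q)[k]$, and check the equivalence on cohomology sheaves. The differences are minor: you get the splitting from CRT idempotents after showing $\prod_k(\psi-N^k)$ is nilpotent (the paper uses telescopes of $p_j(\psi)$), and you compute the stalks yourself rather than citing Ng\^o. Your closing worry about higher $A_\infty$ corrections is unnecessary, because an $E_\infty$-map that is an equivalence on underlying objects is already an equivalence of algebra objects.
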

\begin{proof}
Commutativity of $Q$ and the enhanced K\"unneth and trace
formalism equip $\mathcal A$ with a commutative algebra
structure; see \cite[Theorems~7.2.6--7.2.7]{LZ}.
Fix $N\ge2$ and write $\psi=[N]_*$.
By \cite[Sections~7.4.3--7.4.4]{Ngo},
$\mathcal H^{-k}(\mathcal A)=\Lambda^kT(Q)$ as a graded
algebra, with $\psi$ acting by $N^k$.

For $m\ge\max\{1,2q\}$ and $0\le j\le m$, put
\[
 p_j(t)=\prod_{\substack{0\le k\le m\\k\ne j}}(t-N^k),
 \qquad
 \mathcal A_j=\operatorname*{colim}
 \bigl(\mathcal A\xrightarrow{p_j(\psi)}\mathcal A
 \xrightarrow{p_j(\psi)}\cdots\bigr),
\]
where the colimit is taken in the derived $\infty$-category
of sheaves of $\mathbb Q_\ell$-vector spaces on the
pro-\'etale site of $B_X$.
Since filtered colimits are exact and $p_j(N^k)$ vanishes
precisely for $k\ne j$, the canonical maps give
\[
 \mathcal A\xrightarrow{\sim}\bigoplus_{j=0}^m\mathcal A_j,
 \qquad
 \mathcal A_j\simeq\mathcal H^{-j}(\mathcal A)[j].
\]
Thus the $j=1$ summand defines
$T(Q)[1]\to\mathcal A$, inducing the identity on
$\mathcal H^{-1}$.
Group homomorphisms commute with $\psi$, and enlarging
$m$ only introduces factors invertible on the surviving
cohomology. Hence this inclusion is functorial in $Q$.

By the universal property of the free commutative algebra,
the inclusion extends to
\[
 \operatorname{Sym}(T(Q)[1])
 \simeq\bigoplus_{k\ge0}\Lambda^kT(Q)[k]
 \longrightarrow\mathcal A;
\]
see \cite[Proposition~3.1.3.13 and
Example~3.1.3.14]{HA}.
Indeed, permutations of the shifted factors act on
$T(Q)^{\otimes k}[k]$ with the sign character.
Since $k!$ is invertible in $\mathbb Q_\ell$, this gives
$\operatorname{Sym}^k(T(Q)[1])\simeq\Lambda^kT(Q)[k]$.
The resulting algebra morphism induces the exterior-product
isomorphism on every cohomology sheaf, hence is an
equivalence, functorially in $Q$.
\end{proof}

Lemma~\ref{lem:weak-actual-action} gives $ [\beta]\in\Hom_{D(\mathcal A)}(a^*K_Y,K_X)$. Put $F_n^YK_Y=a^*({}^p\tau_{\le n}K_Y)$ and $F_n^XK_X={}^p\tau_{\le n}K_X$.
We regard filtrations as objects of
\[
\operatorname{Seq}(\mathscr D(B_X))
=
\operatorname{Fun}((\mathbb Z,\le),\mathscr D(B_X)),
\]
equipped with the Day-convolution tensor product. The underlying object of a sequence is its colimit.

Put $F_\ell\mathcal A=\tau^{\mathrm{ord}}_{\le\ell}\mathcal A$. This is an algebra object in $\operatorname{Seq}(\mathscr D(B_X))$. By a filtered $\mathcal A$-module we mean a module over $F_\bullet\mathcal A$ in this category.

A filtered $\mathcal A$-linear lift of the fixed enhanced comparison $\beta$ of degree $s$ is a morphism
\[
\widetilde\beta_s:
F_\bullet^YK_Y\longrightarrow F_{\bullet+s}^XK_X
\]
of filtered $\mathcal A$-modules, together with an identification of its colimit with the given $\mathcal A$-module morphism $\beta$.

All the truncation filtrations used here vanish in sufficiently negative degrees, and their colimits recover the corresponding unfiltered objects.

\begin{lemma}\label{lem:weak-initial-lift}
The cap-product actions equip $F_\bullet^YK_Y$ and $F_\bullet^XK_X$ with filtered $\mathcal A$-module structures.
The comparison $\beta$ admits a filtered $\mathcal A$-linear lift of degree $d_X$:
\[
\widetilde\beta_{d_X}:
F_\bullet^YK_Y\longrightarrow F_{\bullet+d_X}^XK_X.
\]
\end{lemma}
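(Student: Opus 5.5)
The plan is to obtain both filtered module structures from one formal principle about truncation towers, and then to get the degree-$d_X$ lift from a perverse-amplitude estimate for $a^*$.

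\emph{The mixed t-exactness estimate.} On any base $B$, for $M\in{}^pD^{\le n}$ and $N\in D^{\le i}_{\mathrm{ord}}$ constructible, I claim $N\otimes M\in{}^pD^{\le n+i}$.
\begin{itemize}
\item First let $N$ be a constructible sheaf in degree $0$. Over a field, $\mathcal H^j(N\otimes M)=N\otimes\mathcal H^j(M)$. Its support is contained in that of $\mathcal H^j(M)$, so the support condition defining ${}^pD^{\le n}$ is preserved.
\item The general case follows by dévissage along the ordinary Postnikov tower of $N$.
\end{itemize}

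\emph{Filtered structure on $F^X_\bullet K_X$.} Consider $n\mapsto{}^p\tau_{\le n}M$. It is the right adjoint (coreflection) to the inclusions ${}^pD^{\le n}\subset\mathscr D$, so it defines a functor $\mathscr D(B)\to\operatorname{Seq}(\mathscr D(B))$. The estimate above, together with the standard argument that Whitehead towers of a monoidally compatible t-structure are lax monoidal (cf.\ \cite{HA}), makes the pairing
\[
(\tau^{\mathrm{ord}}_{\le\bullet}N,\ {}^p\tau_{\le\bullet}M)\longmapsto{}^p\tau_{\le\bullet}(N\otimes M)
\]
lax monoidal for Day convolution. Concretely, $\tau^{\mathrm{ord}}_{\le i}\mathcal A\otimes{}^p\tau_{\le n}K_X$ lies in ${}^pD^{\le n+i}$. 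Hence the action map to $K_X$ factors uniquely and coherently through ${}^p\tau_{\le n+i}K_X$. Applied to the $\mathcal A$-module $K_X$ of Lemma~\ref{lem:weak-actual-action}, this makes $F^X_\bullet K_X$ a module over $F_\bullet\mathcal A$.

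\emph{Filtered structure on $F^Y_\bullet K_Y$.} Here the same argument fails directly, since $a^*$ may raise perverse degree. Instead I route the action through $B_Y$.
\begin{itemize}
\item Set $\mathcal A_Y=R\pi_{Y!}\Q_\ell[2d_Y](d_Y)$ for $P_Y$. Base change identifies $a^*\mathcal A_Y$ with the homology algebra of $a^*P_Y$.
\item The homomorphism $v$ induces an algebra map $\mathcal A\to a^*\mathcal A_Y$, by the functoriality in Lemma~\ref{lem:weak-adams-algebra}, or directly by the trace formalism. The $\mathcal A$-action on $a^*K_Y\simeq K_{Y_a}$ is the restriction along this map of $a^*$ of the $\mathcal A_Y$-action on $K_Y$.
\item On $B_Y$ the previous paragraph makes ${}^p\tau_{\le\bullet}K_Y$ a $\tau^{\mathrm{ord}}_{\le\bullet}\mathcal A_Y$-module.
\item $a^*$ is symmetric monoidal and t-exact for the ordinary t-structure, so it commutes with $\tau^{\mathrm{ord}}$. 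Also $\mathcal A\to a^*\mathcal A_Y$ respects ordinary truncations.
\item Applying $a^*$ levelwise and restricting scalars along $F_\bullet\mathcal A\to a^*\tau^{\mathrm{ord}}_{\le\bullet}\mathcal A_Y$ gives the filtered $\mathcal A$-module $F^Y_\bullet K_Y$.
\end{itemize}
I expect the main obstacle to be checking that this agrees with the cap-product structure of Lemma~\ref{lem:weak-actual-action} at the enhanced level. This should follow from base change of $\operatorname{Tr}$ along the Cartesian square relating the $Q$-action on $Y_a$ to the $P_Y$-action on $Y$.

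\emph{The degree-$d_X$ lift.} For a morphism $a:B_X\to B_Y$ with fibers of dimension at most $d_X$, $a^*$ maps ${}^pD^{\le n}(B_Y)$ into ${}^pD^{\le n+d_X}(B_X)$. Indeed, if $\dim\operatorname{supp}\mathcal H^j\le -j+n$, the preimage of that support has dimension at most $-j+n+d_X$. So each $a^*{}^p\tau_{\le n}K_Y\to a^*K_Y\xrightarrow{\beta}K_X$ has source in ${}^pD^{\le n+d_X}$. By the coreflection property it factors uniquely through ${}^p\tau_{\le n+d_X}K_X$.

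Formally, $\beta$ induces a map of towers $a^*{}^p\tau_{\le\bullet}K_Y\to{}^p\tau_{\le\bullet+d_X}(a^*K_Y)\to{}^p\tau_{\le\bullet+d_X}K_X$, where the first arrow uses the amplitude bound and the second is functoriality of the tower. Both arrows are compatible with the lax-monoidal structures above, because $\beta$ is $\mathcal A$-linear. This gives $\widetilde\beta_{d_X}$ as a filtered $\mathcal A$-module morphism whose colimit is $\beta$, since all towers are exhaustive and bounded below.
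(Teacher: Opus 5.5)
Your proposal is correct and follows essentially the same route as the paper. The filtered module structures come from the estimate $\tau^{\mathrm{ord}}_{\le\ell}\mathcal A_{P_Z}\otimes\ptau_{\le n}K_Z\in{}^pD^{\le n+\ell}$, with the $Y$-side action obtained by pulling back from $B_Y$ and restricting along $v$, and the degree-$d_X$ lift comes from the coreflection property applied to $a^*\ptau_{\le n}K_Y\in{}^pD^{\le n+d_X}(B_X)$. You derive that last bound from the fiber dimension of $a$, whereas the paper uses ${}^pD^{\le n}\subset D^{\le n}_{\mathrm{ord}}$, ordinary t-exactness of $a^*$, and $D^{\le n}_{\mathrm{ord}}(B_X)\subset{}^pD^{\le n+d_X}(B_X)$; this is a cosmetic difference (and the shift in your $\mathcal A_Y$ should be the relative dimension of $P_Y$, which is not assumed to equal $d_Y$).
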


\begin{proof}
Write $\mathcal A_{P_Z}$ for the homology algebra of $P_Z/B_Z$, with its ordinary truncation filtration.
Tensoring by an ordinary sheaf preserves ${}^pD^{\le n}$, so
\[
F_\ell\mathcal A_{P_Z}\otimes\ptau_{\le n}K_Z
\in{}^pD^{\le n+\ell}(B_Z),
\qquad Z=X,Y.
\]
The cap-product map therefore factors through $\ptau_{\le n+\ell}K_Z$. Restrict the $X$-action along $Q\to P_X$; for $Y$, first pull back from $B_Y$ and then restrict along $Q\to a^*P_Y$.

Also,
\[
a^*(\ptau_{\le n}K_Y)
\in D^{\mathrm{ord},\le n}(B_X)
\subset{}^pD^{\le n+d_X}(B_X),
\]
so the composite with $\beta$ factors through $\ptau_{\le n+d_X}K_X$.

These factorizations retain the enhanced module structures. Indeed, truncation adjunction gives
\[
\operatorname{Map}_{\mathscr D(B)}
(C,\ptau_{\le m}K)
\simeq
\operatorname{Map}_{\mathscr D(B)}(C,K)
\qquad(C\in{}^pD^{\le m}).
\]
The degree bounds above apply to all iterated actions, so the actions and the comparison factor through the
truncations together with their coherences. For sufficiently large $n$, the truncations are the full
complexes and the resulting comparison is $\beta$.
\end{proof}

\begin{remark}
The lemma gives an initial truncation bound. To obtain compatibility in \eqref{eq:compatibility}, we
will improve the degree from $d_X$ to $d_X-d_Y$ while retaining the filtered $\mathcal A$-module structure.
This improvement is obtained by the vanishing of the corresponding obstruction groups.
\end{remark}

\subsection{Geometric bounds and the free exterior factor}
Set
\[
 \mathcal P_i=\pH^{d_Y+i}K_Y,\qquad
 \mathcal R_j=\pH^{d_X+j}K_X,\qquad \Delta=d_Y-d_X.
\]
Choose a finite stratification of $B_X$ by smooth locally
closed subvarieties $S$ such that $r(b)$ is constant on
each $S$, and the restrictions of the homology sheaves
of $Q$ and the sheaves
\[
 U_i^u=\mathcal H^u(\iota^*a^*\mathcal P_i),
 \qquad
 V_j^v=\mathcal H^v(\iota^!\mathcal R_j),
 \qquad \iota:S\hookrightarrow B_X,
\]
are local systems on $S$ for all $i,j,u,v$.
Write $t=\dim S$.
Here $i,j$ are perverse indices, and $u,v$
are ordinary cohomological degrees.
\begin{lemma}\label{lem:weak-stalk-costalk}
On every smooth stratum $S$ of dimension $t$, the ordinary
coefficient systems satisfy
\begin{equation}\label{eq:rhm-stalk-costalk}
 U_i^u\ne0\Rightarrow d_Y+u\le i\le d_Y-u,\qquad
 V_j^v\ne0\Rightarrow d_X-v-2t\le j\le d_X+v+2t.
\end{equation}
\end{lemma}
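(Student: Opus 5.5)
The plan is to read both bounds off the cohomology of fibers of $h_Y$ and $h_X$, using the decomposition theorem and relative hard Lefschetz. Since $Y$ and $X$ are rational homology manifolds and $h_Y,h_X$ are projective, the decomposition theorem gives splittings
\[
K_Y\simeq\bigoplus_i\mathcal P_i[-d_Y-i],\qquad K_X\simeq\bigoplus_j\mathcal R_j[-d_X-j].
\]
Each $\mathcal H^u(\iota^*a^*\mathcal P_i)$ and $\mathcal H^v(\iota^!\mathcal R_j)$ is therefore a direct summand of the corresponding sheaf for $K_Y$ or $K_X$, because $\iota^*,\iota^!,a^*$ commute with finite direct sums.

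Next I use relative hard Lefschetz for the self-dual complexes $K_Y[2d_Y]=Rh_{Y*}\IC_Y$ and $K_X[2d_X]$. It gives
\[
\mathcal P_{d_Y+c}\simeq\mathcal P_{d_Y-c},\qquad \mathcal R_{d_X+c}\simeq\mathcal R_{d_X-c}
\]
up to Tate twist. So each of the two claimed two-sided inequalities reduces to a one-sided one; the other side follows by the substitution $i\mapsto 2d_Y-i$ or $j\mapsto 2d_X-j$.

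\emph{Stalk bound.} For $b\in S$ and $y=a(b)$, the stalk of $\iota^*a^*\mathcal P_i$ at $b$ is the stalk of $\mathcal P_i$ at $y$. Proper base change identifies $\mathcal H^m(K_Y)_y$ with $H^m(Y_y)$. The summand $\mathcal P_i[-d_Y-i]$ contributes $\mathcal H^u(\mathcal P_i)_y$ in degree $m=u+d_Y+i$. Since $\dim Y_y=d_Y$, we have $m\le 2d_Y$, which gives $i\le d_Y-u$. Applying this bound to $\mathcal P_{2d_Y-i}\simeq\mathcal P_i$ yields $2d_Y-i\le d_Y-u$, that is, $d_Y+u\le i$.

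\emph{Costalk bound.} By rational smoothness, $\Q_X[4d_X](2d_X)\simeq\omega_X$, the dualizing complex. Base change then gives
\[
\iota^!K_X\simeq Rh_{X_S*}\,\omega_{X_S}[-4d_X](-2d_X).
\]
Because all $\mathcal H^v(\iota^!\mathcal R_j)$ are local systems on the smooth $t$-dimensional stratum $S$, restricting further to a point $b\in S$ only shifts degrees: $H^v$ of $\iota^!$ at $b$ equals $H^{v+2t}$ of the point costalk $b^!$. The point costalk of $K_X$ is Borel–Moore homology:
\[
H^{m}(b^!K_X)=H^{BM}_{4d_X-m}(X_b).
\]
This is nonzero only when $4d_X-m\in[0,2d_X]$, i.e. $m\ge 2d_X$. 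The summand $\mathcal R_j$ sits in degree $m=v+2t+d_X+j$, so $j\ge d_X-v-2t$. Applying this to $\mathcal R_{2d_X-j}\simeq\mathcal R_j$ gives $j\le d_X+v+2t$.

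The main point requiring care is the costalk step. One has to check that the local-system hypothesis on the stratification really converts the stalk of $\iota^!$ into the point costalk shifted by exactly $2t$. This uses $b_S^!L=L_b[-2t](-t)$ for a complex $L$ with locally constant cohomology on smooth $S$. One also has to check that passing to direct summands commutes with $\iota^!$. The remaining steps are bookkeeping of degrees.
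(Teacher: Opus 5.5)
Your proof is correct and is essentially the paper's argument: the stalk bound, from the fiber-dimension estimate plus relative hard Lefschetz, is identical. For the costalk bound you compute $b^!K_X$ directly as Borel--Moore homology of $X_b$, whereas the paper uses Verdier duality on $S$ and $\mathbb D\mathcal R_j\simeq\mathcal R_{2d_X-j}(2d_X)$ to rewrite $(V_j^v)_b$ as the dual of a stalk of $\mathcal R_{2d_X-j}$ and then reuses the stalk bound for $X$ (a formula it needs again in Proposition~\ref{prop:weak-costalk-free}); since $H^{BM}_k(X_b)\simeq H^k(X_b)^\vee$ for the proper fiber, the two estimates are the same.
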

\begin{proof}

Fix $b\in S$. Since $\mathcal P_i[-d_Y-i]$ is a direct
summand of $K_Y$, proper base change identifies $(U_i^u)_b$
with a direct summand of
$H^{d_Y+i+u}(Y_{a(b)},\mathbb Q_\ell)$.
The fiber dimension bound therefore gives $i+u\le d_Y$
whenever $(U_i^u)_b\ne0$. Relative hard Lefschetz gives
$\mathcal P_i\simeq\mathcal P_{2d_Y-i}(d_Y-i)$.
Applying the same bound  yields
$2d_Y-i+u\le d_Y$, hence $i\ge d_Y+u$.

Since $\mathbb D K_X\simeq K_X[4d_X](2d_X)$, we have
$\mathbb D\mathcal R_j\simeq
\mathcal R_{2d_X-j}(2d_X)$.
Verdier duality on the smooth stratum $S$, where the
cohomology sheaves are locally constant, gives
\[
 (V_j^v)_b\simeq
 \left(
 \mathcal H^{-v-2t}
 (\iota^*\mathcal R_{2d_X-j})_b
 \right)^\vee(t-2d_X).
\]
Thus $(V_j^v)_b\ne0$ implies
$2d_X-j\ge d_X-v-2t$ by the stalk bound for $X$,
equivalently $j\le d_X+v+2t$. Apply the relative hard Lefschetz again, $\mathcal R_j\simeq \mathcal R_{2d_X-j}(d_X-j)$, then we have $j\ge d_X-v-2t$.
\end{proof}

For $b\in S$, let
\[
 f_b:T(Q)_b\xrightarrow{T(u)_b}T(P_X)_b
 \longrightarrow T(\operatorname{Ab}P_{X,b}),
 \qquad
 r=\frac12\dim_{\mathbb{Q}_\ell}\operatorname{im}f_b.
\]
Fix a subspace $A_Q\subset T(Q)_b$ such that
$f_b|_{A_Q}:A_Q\xrightarrow{\sim}\operatorname{im}f_b$.

\begin{proposition}\label{prop:weak-costalk-free}
Let $A_Q\subset T(Q)_b$ as above with $\dim A_Q=2r$.
For every ordinary cohomological degree $v$, the graded module $\bigoplus_j (V_j^v)_b$ is free over $\Lambda A_Q$ under the action induced by Lemma~\ref{lem:weak-actual-action}.
\end{proposition}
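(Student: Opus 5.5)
Following Ngô's freeness argument in \cite[Section~7.4]{Ngo} and its refinement in \cite[Section~1]{MSChi}, we reduce the freeness of $\bigoplus_j(V^v_j)_b$ to a statement about the cohomology of the fibre $X_b$. First we identify the costalk. Since $X$ is a rational homology manifold and $h_X$ is projective, the decomposition theorem gives $K_X\simeq\bigoplus_j\mathcal R_j[-d_X-j]$, and this splitting can be taken compatible with the $\Lambda T(Q)$-action on cohomology sheaves (after Lemma~\ref{lem:weak-adams-algebra}, the action of $\mathcal A$ on $\bigoplus_j\pH^{\bullet}$ is through the graded algebra $\bigoplus_k\Lambda^kT(Q)[k]$). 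Base change gives $\iota^!K_X=R h_{X_S*}\iota_X^!\Q_{\ell,X}$, and rational smoothness gives $\iota_X^!\Q_{\ell,X}\simeq\omega_{X_S}[-4d_X](-2d_X)$. At $b$, the stalk of $\mathcal H^v$ therefore becomes, up to a shift by $2t$ and a twist, the Borel--Moore homology of $X_b$ in a fixed degree, or, by Verdier duality, the dual of $H^\ast(X_b)$. Hence $\bigoplus_{j}(V^v_j)_b$ is identified with a graded piece of $H^{BM}_\ast(X_b)$, graded by perverse degree $j$, and the $\Lambda A_Q$-action is the cap product by $T(Q)_b=H_1(Q_b)$ acting through $u$. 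The perverse grading is respected because $\Lambda^k$ shifts perverse degree by $k$ (Lemma~\ref{lem:weak-initial-lift}).

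Next we reduce to an abelian variety acting with finite stabilizers. Via $T(u)_b$, the action of $A_Q$ factors through $T(P_{X,b})$. By the polarizability in Assumption~\ref{assum:geometry of comparisons}(4), modulo the Tate module of the affine part it lands isomorphically on a $2r$-dimensional subspace of $T(\Ab P_{X,b})$. Choose an abelian subvariety $A'\subset\Ab(P_{X,b})$ of dimension $r$ with $T(A')=\im f_b$; its Tate module is a polarized, hence nondegenerate, sub-Hodge structure. Using Chevalley and the standard lifting (as in \cite[7.4.8]{Ngo}), choose a complete-up-to-isogeny abelian variety $A\to P_{X,b}$ whose composite to $\Ab$ is an isogeny onto $A'$. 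Because stabilizers of $P_X$ are affine, $A$ acts on $X_b$ with finite stabilizers. The cap action of $T(A)$ then agrees with that of $A_Q$ modulo the action of the affine part. Adjusting by the affine part is harmless after passing to the associated graded for the filtration by the ideal generated by $T(P^{\mathrm{aff}}_b)$, because freeness over an exterior algebra can be checked on a graded deformation. This compatibility is the point to check carefully, and is the step we expect to be the main obstacle: the lift $A_Q$ is arbitrary and does not itself come from an abelian variety, so we must show that the $\Lambda A_Q$-module structure is isomorphic, as a graded module, to the $\Lambda T(A)$-structure. We would try to arrange this with a filtration argument in which $A_Q$ and $T(A)$ have the same image in $T(\Ab)$.

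Finally we prove freeness for the abelian action. With finite stabilizers, the quotient $X_b\to[X_b/A]$ is rationally an $A$-torsor. A Leray--Hirsch (Borel) argument then shows $H^{BM}_\ast(X_b,\Q_\ell)\simeq H_\ast(A)\otimes H^{BM}_\ast(X_b/A)$ as $\Lambda T(A)=H_\ast(A)$-modules. Equivalently, one can use Ngô's lemma that $H^\ast$ of a space with a finite-stabilizer abelian variety action is free over $H_\ast(A)$ \cite[Proposition~7.4.10]{Ngo}. Freeness is compatible with the grading because the splitting from the Adams decomposition in Lemma~\ref{lem:weak-adams-algebra} is canonical. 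Since $V^v$ for fixed $v$ is a homogeneous component of a grading that both the perverse degree and the cap action respect, freeness of the total module passes to $\bigoplus_j(V^v_j)_b$: a graded direct summand stable under the action of a free graded module over a connected graded exterior algebra is again free.
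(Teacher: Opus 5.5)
Your outline has a genuine gap at the step you flag yourself, and a second gap you do not flag.

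\textbf{The reduction to an abelian variety.} Over $\C$ the Chevalley extension $1\to P^{\mathrm{aff}}_{X,b}\to P_{X,b}\to\Ab(P_{X,b})\to1$ need not split up to isogeny. So an abelian variety $A\to P_{X,b}$ mapping isogenously onto a given $A'\subset\Ab(P_{X,b})$ need not exist. Take a generalized Jacobian $1\to\mathbb G_m\to J\to E\to1$ whose class in $E^\vee(\C)$ is non-torsion, as happens for nodal spectral curves. A lift would split the pullback of the extension along an isogeny, which forces the class to be torsion. The lifting you cite is a finite-field phenomenon, where extension classes are torsion.

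Even where a lift exists, $T(A)$ and $T(u)_b(A_Q)$ differ by a linear map into $T(P^{\mathrm{aff}}_{X,b})$, which may act nontrivially. Your filtration remark does not bridge this. The implication you can use is ``associated graded free $\Rightarrow$ free'', which needs freeness of $\operatorname{gr}M$. The abelian-variety argument only gives freeness of $M$ itself, and the pieces $I^kM$ need not be free summands.

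The paper never compares lifts. It cites Ng\^o and Maulik--Shen for freeness of the stalk module $\bigoplus_j\mathcal H^m(\iota^*\mathcal R_j)_b$ over $\Lambda A$, for a \emph{linear} lift $A\subset T(P_X)_b$ of $T(\Ab P_{X,b})$. It then chooses $A\supseteq T(u)_b(A_Q)$. This is possible because $T(u)_b(A_Q)$ maps injectively to $T(\Ab P_{X,b})$. For a complement $C$ one has $\Lambda A\cong\Lambda\bigl(T(u)_b(A_Q)\bigr)\otimes\Lambda C$, so $\Lambda A$ is free over $\Lambda A_Q$ and freeness restricts. This single observation replaces your entire second step.

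\textbf{The module in the statement is not $H^{\mathrm{BM}}_*(X_b)$ with its cap product.} The $\Lambda A_Q$-action on $\bigoplus_j(V^v_j)_b$ is the one induced on the associated graded of the perverse filtration, since $V$ is built from $\operatorname{gr}^F$ of the filtered module. A decomposition-theorem splitting of $K_X$ is neither canonical nor $\mathcal A$-linear in general. The Adams splitting splits $\mathcal A$, not $K_X$, so it gives no such compatibility.

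Freeness does not pass from a filtered module to its associated graded. For example, $\Lambda\langle\xi\rangle$ with $1\in F_0$ and $\xi\in F_{-3}$ is free, yet $\operatorname{gr}\xi=0$. So even a complete proof that $H^{\mathrm{BM}}_*(X_b)$ is free over $\Lambda A_Q$ would not yield the proposition. Also, for fixed $v$ the pieces $(V^v_j)_b$ lie in the different Borel--Moore degrees $v+d_X+j$, not in one fixed degree.

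The paper works with the perverse sheaves throughout. On the smooth stratum, Verdier duality identifies $\mathcal H^v(\iota^!\mathcal R_j)_b$ with the dual of $\mathcal H^{-v-2t}(\iota^*\mathcal R_{2d_X-j})_b$ up to twist. The costalk operators are the transposes of the stalk operators, up to the sign from group inversion. Since $\Lambda A$ is a Frobenius algebra, the graded dual of a finite free $\Lambda A$-module is again free.
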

\begin{proof}

Since $S$ is smooth of dimension $t$, by Verdier duality,
\[
 \mathcal H^v(\iota^!\mathcal R_j)_b
 \simeq
 \left(
 \mathcal H^{-v-2t}
 (\iota^*\mathcal R_{2d_X-j})_b
 \right)^\vee(t-2d_X).
\]

Fix a lift $\lambda:T(\operatorname{Ab}P_{X,b})\to T(P_{X,b})$
and put $A=\operatorname{im}\lambda$.
Each vector $\xi\in A$ is a generator of $\Lambda A$
in grading degree $-1$.
The costalk operator
\[
 \xi:\mathcal H^v(\iota^!\mathcal R_j)_b
 \longrightarrow
 \mathcal H^v(\iota^!\mathcal R_{j-1})_b
\]
corresponds under Verdier duality to the transpose of
\[
 \xi:\mathcal H^{-v-2t}(\iota^*\mathcal R_{2d_X-j+1})_b
 \longrightarrow
 \mathcal H^{-v-2t}(\iota^*\mathcal R_{2d_X-j})_b,
\]
with the usual grading signs and the sign
$\xi\mapsto-\xi$ induced by group inversion.
Thus the costalk module carries the graded dual action
of the stalk module.

By \cite[Propositions~7.4.10, 7.5.5, 7.7.4 and
Lemma~7.7.5]{Ngo} (see also \cite[Proposition~1.5 and Section~1.6, Step~B]{MSChi}), $\bigoplus_j\mathcal H^m(\iota^*\mathcal R_j)_b$ is a free $\Lambda A$-module. Writing $m=\dim A$, the perfect pairing $\Lambda^kA\otimes\Lambda^{m-k}A\to\Lambda^mA$ shows that the graded dual of a finite free $\Lambda A$-module is free. Twisting the action by $\xi\mapsto-\xi$ also preserves
freeness. Hence $\bigoplus_j\mathcal H^v(\iota^!\mathcal R_j)_b$ is free over $\Lambda A$.

Recall that $u:Q\to P_X$ is the homomorphism in our assumptions.
Since $T(u)_b(A_Q)$ projects injectively to
$T(\operatorname{Ab}P_{X,b})$, we may take the subspace
$A\subset T(P_X)_b$ in the preceding argument to satisfy
\[
 T(u)_b(A_Q)\subset A,
 \qquad
A\xrightarrow{\sim}T(\operatorname{Ab}P_{X,b}).
\]
Naturality of the cap product action identifies this restriction with the action induced by $Q$. Since $\Lambda A$ is free over $\Lambda A_Q$, the freeness then follows.
\end{proof}

\begin{remark}\label{rmk:weak-costalk-free-IC}
Proposition~\ref{prop:weak-costalk-free} also holds without
the RHM assumption if $K_X$ is replaced by
$Rh_{X*}\mathrm{IC}_X[-\dim X]$.
Indeed, the strict-support freeness also holds for
equivariant IC coefficients
\cite[Propositions~1.4--1.6]{MSChi}. The arguments for the costalk and the restriction along $Q\to P_X$ are unchanged.
\end{remark}

\subsection{From local degree bounds to global Ext vanishing}
\label{subsec:global-ext}
Put
\[
\operatorname{Gr}(\mathscr D(B_X))
=
\operatorname{Fun}(\mathbb Z_{\mathrm{disc}},\mathscr D(B_X)),
\]
with its Day-convolution tensor product. For an algebra object $\mathcal B$ in this category, define
\[
D_{\mathrm{gr}}(\mathcal B)
=
h\operatorname{Mod}_{\mathcal B}
\bigl(\operatorname{Gr}(\mathscr D(B_X))\bigr).
\]

The associated graded construction is given by
\[
(\operatorname{gr}F)_n
=
\operatorname{cofib}(F_{n-1}\to F_n).
\]
It is symmetric monoidal for the relevant convolution products; see \cite[Section~2.23 and Proposition~2.26]{GP}.
It therefore sends filtered algebras and modules to graded algebras and modules.

When $\mathcal B$ is concentrated in cohomological degree zero, $D_{\mathrm{gr}}(\mathcal B)$ identifies with the
derived category of sheaves of internally graded $\mathcal B$-modules. For a graded complex $C$, use the shearing convention
\[
(\mathsf SC)_n=C_n[n],
\]
with the usual Koszul signs.

Since
\[
\operatorname{gr}_{-k}^F\mathcal A
\simeq\Lambda^kT(Q)[k],
\]
the sheared associated graded algebra has cohomology concentrated in degree zero and is quasi-isomorphic to
\[
E=\Lambda T(Q),
\qquad E_{-k}=\Lambda^kT(Q).
\]

Shearing and reindexing the associated graded modules give objects $U,V\in D_{\mathrm{gr}}(E)$ with components
\[
\begin{aligned}
U_i
&=(\operatorname{gr}_{d_Y+i}^F F^YK_Y)[d_Y+i]
 \simeq a^*\mathcal P_i,\\
V_j
&=(\operatorname{gr}_{d_X+j}^F F^XK_X)[d_X+j]
 \simeq\mathcal R_j.
\end{aligned}
\]
with the $E$-module structures induced by the filtered cap-product actions. In particular, their stalk and costalk cohomology modules are precisely the $U_i^u$ and $V_j^v$ considered above.

Write $\cRHom_B^{\mathrm{gr}}(L,M)$ for the internal derived Hom of graded $B$-modules, preserving internal degree, and put
$\cExt_B^{e,\mathrm{gr}}(L,M)
=\mathcal H^e\cRHom_B^{\mathrm{gr}}(L,M)$. Over a base $W$, global derived Hom is
\[
\RHom_B^{\mathrm{gr}}(L,M)
=R\Gamma\bigl(W,\cRHom_B^{\mathrm{gr}}(L,M)\bigr).
\]

In particular,
\[
\Hom_{D_{\mathrm{gr}}(\mathcal B)}(L,M[e])
=
H^eR\Gamma\bigl(
B_X,\cRHom_{\mathcal B}^{\mathrm{gr}}(L,M)
\bigr).
\]
The superscript $\mathrm{gr}$ requires preservation of internal degree. The shifts $[e]$ and $\langle\delta\rangle$ refer to cohomological and internal degree, respectively. Finally, define
\[
(V\langle\delta\rangle)_i=V_{i+\delta},
\qquad
\Ext^e_{E,\delta}(U,V)
=
\Hom_{D_{\mathrm{gr}}(E)}
\bigl(U,V\langle\delta\rangle[e]\bigr).
\]

\begin{lemma}\label{lem:weak-degree-bound}
On a pro-\'etale cover of a stratum $S$ of dimension $t$ trivializing $T(Q)|_S$ and the graded local systems $U^u=\bigoplus_i U_i^u$ and $V^v=\bigoplus_j V_j^v$, a nonzero morphism
$f:U^u\to V^v\langle\lambda\rangle$
of graded $\Lambda A_Q$-modules satisfies
\begin{equation}\label{eq:rhm-degree-bound}
 \lambda\le-\Delta+v-u+2(t-r).
\end{equation}
Here $\Delta=d_Y-d_X$, $\dim A_Q=2r$.
\end{lemma}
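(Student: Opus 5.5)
The idea is to improve the trivial index estimate coming from Lemma~\ref{lem:weak-stalk-costalk} by $2r$. The improvement comes from the freeness of $V^v$ over $\Lambda A_Q$ established in Proposition~\ref{prop:weak-costalk-free}. Both $U^u$ and $V^v$ are graded local systems on the trivializing cover, and a morphism of local systems is nonzero iff it is nonzero on one stalk. So I will work at a single point $b\in S$. There $f$ is a degree-preserving map $f:\bigoplus_i(U_i^u)_b\to\bigoplus_j(V_j^v)_b$ sending $U_i^u$ to $V^v_{i+\lambda}$. Each $\xi\in A_Q$ acts in internal degree $-1$, and a monomial of length $k$ lowers the index by $k$.

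\textbf{Trivial bound.} If $f$ is nonzero on $U_i^u$, then Lemma~\ref{lem:weak-stalk-costalk} gives $i\ge d_Y+u$ (since $U_i^u\neq 0$) and $i+\lambda\le d_X+v+2t$ (since $V_{i+\lambda}^v\neq 0$). Subtracting yields $\lambda\le -\Delta+v-u+2t$, which is the claim without the $-2r$.

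\textbf{Pushing into the socle.} To gain $2r$, I would replace the test element by one lying in the socle of the free module. By Proposition~\ref{prop:weak-costalk-free}, write $(V^v)_b\simeq\Lambda A_Q\otimes W$ with $W$ a graded vector space having a homogeneous basis $\{w_\alpha\}$. Choose a basis $\xi_1,\dots,\xi_{2r}$ of $A_Q$.

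Pick homogeneous $y\in U_i^u$ with $x=f(y)\neq0$ in $V_{i+\lambda}^v$. Expand $x=\sum_{I,\alpha}c_{I,\alpha}\,\xi_I w_\alpha$ and choose a pair $(I_0,\alpha_0)$ with $c_{I_0,\alpha_0}\ne0$ and $|I_0|$ minimal. Let $\xi_{J}$ be the complementary monomial to $\xi_{I_0}$, of length $k=2r-|I_0|$. Then $\xi_J x$ is a nonzero element of $\Lambda^{2r}A_Q\otimes W$: the term $\pm c_{I_0,\alpha_0}\xi_{1}\cdots\xi_{2r}w_{\alpha_0}$ survives, and all other terms either vanish or involve different basis vectors $w_\alpha$. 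The Koszul signs are harmless here.

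\textbf{Reading off the indices.} By $\Lambda A_Q$-linearity, $f(\xi_J y)=\xi_J x\neq 0$. Hence $\xi_J y$ is a nonzero element of $U^u_{i-k}$, and Lemma~\ref{lem:weak-stalk-costalk} gives $i-k\ge d_Y+u$. On the other side, $\xi_J x$ lies in $V^v_{i+\lambda-k}$ and has the form $\omega\otimes w$ with $\omega=\xi_1\cdots\xi_{2r}$ of internal degree $-2r$. Therefore some nonzero homogeneous component of $W$, equivalently of $V^v=\Lambda A_Q\otimes W$ via $1\otimes W$, sits in index $i+\lambda-k+2r$. Lemma~\ref{lem:weak-stalk-costalk} then forces $i+\lambda-k+2r\le d_X+v+2t$. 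Subtracting the two inequalities gives
\[
\lambda\le d_X-d_Y+v-u+2t-2r=-\Delta+v-u+2(t-r).
\]

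\textbf{Main obstacle.} The only delicate point is the socle step. One must make sure the free structure from Proposition~\ref{prop:weak-costalk-free} is graded, i.e.\ that the basis $W$ can be taken homogeneous. This holds because the module is graded and $\Lambda A_Q$ is a graded local algebra: lift a homogeneous basis of $V/A_QV$. One must also check that the $Q$-action used in $f$ is exactly the one for which freeness was proved. That identification is the naturality statement at the end of the proof of Proposition~\ref{prop:weak-costalk-free}, so I would simply cite it.
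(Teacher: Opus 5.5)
Your proof is correct and is essentially the paper's argument. The paper likewise uses $\Lambda A_Q$-linearity of $f$ and nilpotency of the augmentation ideal to replace the test element by one whose image is annihilated by $A_Q$. Such an image lies in the socle, spanned by the elements $(a_1\cdots a_{2r})e_\alpha$ of the free module. The paper then combines the generator bound $h_\alpha\le d_X+v+2t$ with the stalk bound $i\ge d_Y+u$, exactly as you do. Your complementary-monomial construction is simply an explicit version of the paper's nilpotency step.
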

\begin{proof}
On such a pro-\'etale cover of $S$, split
$T(Q)=A_Q\oplus R_0$ as vector spaces. Let $f:U^u\to V^v\langle\lambda\rangle$ be a nonzero
morphism of graded $\Lambda A_Q$-modules, with components
$f_i:U_i^u\to V_{i+\lambda}^v$.
Since the augmentation ideal of $\Lambda A_Q$ is nilpotent,
there exist an index $i$ and an element $x\in U_i^u$
such that
\[
 0\ne y:=f_i(x)\in V_{i+\lambda}^v,
 \qquad
 \xi y=0\quad\text{for every }\xi\in A_Q.
\]  We now bound the degrees of $x$ and $y$.
Choose a basis $a_1,\ldots,a_{2r}$ of $A_Q$.
Each $a_k$ acts by maps $V_j^v\to V_{j-1}^v$.
By Proposition~\ref{prop:weak-costalk-free}, choose
free generators $e_\alpha\in V_{h_\alpha}^v$ such that
\[
 V^v=\bigoplus_\alpha(\Lambda A_Q)e_\alpha.
\]
Since $e_\alpha\ne0$,
Lemma~\ref{lem:weak-stalk-costalk} gives $h_\alpha\le d_X+v+2t$.

In $\Lambda A_Q$, an element is annihilated by every
$a_k$ if and only if it is a scalar multiple of
$a_1\cdots a_{2r}$.
Consequently, the relations $a_k y=0$ and freeness imply $y=\sum_\alpha c_\alpha\cdot(a_1\cdots a_{2r})e_\alpha$.
Each nonzero term belongs to $V_{h_\alpha-2r}^v$.
Since $0\ne y\in V_{i+\lambda}^v$, at least one
$c_\alpha$ is nonzero, and for every such $\alpha$,
\[
 i+\lambda=h_\alpha-2r
 \le d_X+v+2t-2r.
\] 

On the other hand, $f_i(x)=y\ne0$ implies $U_i^u\ne0$,
so Lemma~\ref{lem:weak-stalk-costalk} gives
$i\ge d_Y+u$.
Combining these inequalities yields
\[
 \lambda\le(d_X+v+2t-2r)-(d_Y+u)
 =-\Delta+v-u+2(t-r),
\]
which proves \eqref{eq:rhm-degree-bound}.

\end{proof}

\begin{proposition}\label{prop:weak-global-ext}
For the objects $U,V\in D_{\mathrm{gr}}(E)$ defined above, the effective-rank
inequality~\ref{assum:effective-rank inequality}, i.e., $r\geq t$, on every stratum implies
\[
 \Ext^e_{E,\delta}(U,V)=0\qquad \text{if} \ \ e<\Delta+\delta .
\]
More generally, if $r\ge t-\kappa$ on every stratum, the vanishing
range is $e<\Delta+\delta-2\kappa$. 
\end{proposition}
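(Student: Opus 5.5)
We reduce the global vanishing to the pointwise bound of Lemma~\ref{lem:weak-degree-bound} by dévissage along the stratification, then use a hypercohomology spectral sequence on each stratum. The first step is to filter $V$ (equivalently $\cRHom^{\mathrm{gr}}_E(U,V)$) by the stratification $\{S\}$. Using the open–closed triangles $j_!j^*\to\mathrm{id}\to i_*i^*$ applied to $\cRHom_E^{\mathrm{gr}}(U,V\langle\delta\rangle)$, it suffices to show, for each stratum $\iota:S\hookrightarrow B_X$, that
\[
H^e R\Gamma\bigl(S,\iota^!\cRHom^{\mathrm{gr}}_E(U,V\langle\delta\rangle)\bigr)=0\qquad(e<\Delta+\delta-2\kappa).
\]
We then rewrite $\iota^!\cRHom_E^{\mathrm{gr}}(U,V)\simeq\cRHom^{\mathrm{gr}}_{\iota^*E}(\iota^*U,\iota^!V)$. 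This is the standard adjunction $\iota^!\cRHom(L,M)\simeq\cRHom(\iota^*L,\iota^!M)$, which is compatible with module structures because $E$ acts through $\iota^*E=\Lambda T(Q)|_S$.

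On $S$ the complexes $\iota^*U$ and $\iota^!V$ have locally constant cohomology $U^u$ and $V^v$. Filtering both by ordinary truncation gives a spectral sequence whose $E_1$-terms are
\[
H^p\bigl(S,\cExt^{q,\mathrm{gr}}_{\Lambda T(Q)}(U^u,V^v\langle\delta\rangle)\bigr)
\]
contributing to total degree $e=p+q+v-u$. Since $S$ has dimension $t$, ordinary sheaf cohomology of local systems on $S$ vanishes for $p>2t$, but this is the wrong direction. What we actually need is that low total degrees vanish.

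The key point is to bound $q$ from below via $\lambda$. Here we restrict from $\Lambda T(Q)$ to $\Lambda A_Q$. Because $\Lambda T(Q)$ is free over $\Lambda A_Q$, and $V^v$ is free over $\Lambda A_Q$ by Proposition~\ref{prop:weak-costalk-free}, freeness gives that $V^v$ is injective over the Frobenius algebra $\Lambda A_Q$. A change-of-rings argument then expresses $\cExt^q_{\Lambda T(Q)}$ via a Koszul-type resolution in the complementary directions $R_0$ (of dimension $2(q_0-r)$, where $q_0=\operatorname{rk}T(Q)$). Each Koszul step raises $q$ by one and shifts the internal degree by $-1$, with $\Hom_{\Lambda A_Q}$ taking the remaining role. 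Hence a nonzero class in $\cExt^{q}$ in internal degree $\delta$ yields a nonzero $\Lambda A_Q$-homomorphism $U^u\to V^v\langle\delta+q\rangle$ (shift sign to be fixed by the grading convention $E_{-k}=\Lambda^k$). Lemma~\ref{lem:weak-degree-bound} then gives
\[
\delta+q\le -\Delta+v-u+2(t-r),\qquad\text{hence}\qquad q+v-u\ge \Delta+\delta-2(t-r)+2q.
\]
The precise inequality needs care. The target is total degree $e=q+v-u\ge\Delta+\delta-2(t-r)$, and also $p\ge0$. With $r\ge t-\kappa$ this gives vanishing for $e<\Delta+\delta-2\kappa$, and for $\kappa=0$ the stated range.

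The main obstacle is the bookkeeping of signs in the relation between cohomological Ext degree and internal degree shift. Costalk degrees also enter, since $\iota^!$ is already accounted for in $V^v$ via Lemma~\ref{lem:weak-stalk-costalk}, so no extra $2t$ from $R\Gamma(S,-)$ should appear. If the Koszul shift turns out to go the other way, I would instead pass to the sheared, graded Koszul dual over $\operatorname{Sym}(T(Q)^\vee)$. There, Ext against the injective part becomes Hom, and Lemma~\ref{lem:weak-degree-bound} applies directly to the socle $a_1\cdots a_{2r}e_\alpha$ as in its proof.
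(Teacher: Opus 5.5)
Your proposal follows essentially the same route as the paper: reduction along the strata to $\cRHom_{\iota^*E}(\iota^*U,\iota^!V)$, truncation in the ordinary degrees $u,v$, injectivity of the free module $V^v$ over the Frobenius algebra $\Lambda A_Q$, the Koszul resolution over $\Lambda R_0$ applied to $H=\bigoplus_\lambda\cHom^{\mathrm{gr}}_{\Lambda A_Q}(U^u,V^v\langle\lambda\rangle)$, and Lemma~\ref{lem:weak-degree-bound}. Two small corrections: the triangle that produces $\iota^!$ is $\iota_*\iota^!\to\id\to Rj_*j^*$ (not $j_!j^*\to\id\to\iota_*\iota^*$); and the generators of the $q$-th Koszul term sit in internal degree $-q$, so a nonzero class in $\cExt^{q,\mathrm{gr}}_E(U^u,V^v\langle\delta\rangle)$ requires a nonzero map $U^u\to V^v\langle\delta-q\rangle$, which by the lemma gives exactly $q+v-u\ge\Delta+\delta-2(t-r)$, so your Koszul-dual fallback is unnecessary.
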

\begin{proof}
Fix a stratum $S$, passing to the pro-\'etale covering of $S$ if necessary, we may assume $T(Q)=A_Q\oplus R_0$. Then $E=\Lambda A_Q\otimes_{\Q_\ell}\Lambda R_0$, and the tensor product is graded: for $\xi\in R_0$
and $a\in \Lambda A_Q$ of degree $|a|$, $\xi a=(-1)^{|a|}a\xi$.  All Hom and Ext operations in the following local calculation are internal and preserve the grading.

For each fixed $v$, Proposition~\ref{prop:weak-costalk-free} makes $V^v$ a free graded $\Lambda A_Q$-module.
For fixed $u,v$, Lemma~\ref{lem:weak-degree-bound} gives
\[
 \cHom_{\Lambda A_Q}^{\mathrm{gr}}
 \bigl(U^u,V^v\langle\lambda\rangle\bigr)=0
 \qquad
 \text{if }\lambda>-\Delta+v-u+2(t-r).
\] 
After refining the pro-\'etale cover to trivialize $T(Q)$, $U^u$ and $V^v$, internal Ext is computed by sheafifying resolutions of $\Lambda A_Q$-modules by finite free graded modules. Since $\Lambda A_Q$ is a finite-dimensional Frobenius
algebra, every finite free graded $\Lambda A_Q$-module
is injective in the category of graded modules.
Thus
\[
 \cExt_{\Lambda A_Q}^{q,\mathrm{gr}}
 \bigl(U^u,V^v\langle\lambda\rangle\bigr)=0
 \qquad(q>0).
\]

Define a graded sheaf $H=\bigoplus_\lambda H_\lambda$ by
\[
 H_\lambda=
 \cHom_{\Lambda A_Q}^{\mathrm{gr}}
 \bigl(U^u,V^v\langle\lambda\rangle\bigr).
\]
Thus a local section $f\in H_\lambda$ is a family of maps $f_i:U_i^u\longrightarrow V_{i+\lambda}^v$ satisfying $f(am)=af(m)$.

For $\xi\in R_0$, define its action on $H$ by
\[
 (\xi\star f)(m)
 =
 (-1)^i\bigl(\xi_Vf(m)-f(\xi_Um)\bigr),
 \qquad m\in U_i^u,
\]
which defines a $\Lambda A_Q$-linear map. Here the subscripts in $\xi_U,\xi_V$ indicate the module on which $\xi$ acts. The operators satisfy $\xi\star(\eta\star f)+\eta\star(\xi\star f)=0$, hence make $H$ a graded $\Lambda R_0$-module. 

The condition $\xi\star f=0$ for every $\xi\in R_0$
is equivalent to $\xi_Vf(m)=f(\xi_Um)$.
Together with $\Lambda A_Q$-linearity, this is precisely
$E$-linearity. Consequently,
\[
 \cHom_{E}^{\mathrm{gr}}
 \bigl(U^u,V^v\langle\delta\rangle\bigr)
 \simeq
 \cHom_{\Lambda R_0}^{\mathrm{gr}}
 \bigl(\mathbb{Q}_\ell,H\langle\delta\rangle\bigr),
\]
where $\mathbb{Q}_\ell$ is concentrated in degree zero
with trivial $R_0$-action.

We have an identity of composite functors:
\[
 \cHom_{E}^{\mathrm{gr}}(U^u,-)
 \simeq
 \cHom_{\Lambda R_0}^{\mathrm{gr}}
 (\mathbb{Q}_\ell,-)
 \circ
 \left(
 \bigoplus_{\lambda\in\mathbb Z}
 \cHom_{\Lambda A_Q}^{\mathrm{gr}}
 \bigl(U^u,(-)\langle\lambda\rangle\bigr)
 \right).
\]

The inner functor carries the $\Lambda R_0$-action defined above.
It is right adjoint to the exact functor $W\longmapsto U^u\otimes_{\mathbb{Q}_\ell}W$,
with $E$-action
\[
 a(m\otimes w)=am\otimes w,
 \qquad
 \xi(m\otimes w)
 =\xi_Um\otimes w+(-1)^i m\otimes\xi_Ww,
 \qquad m\in U_i^u.
\]
It therefore sends injective graded $E$-modules
to injective graded $\Lambda R_0$-modules.
Since $E$ is free over $\Lambda A_Q$, restriction to $\Lambda A_Q$ also preserves injective modules.
Hence we have the Grothendieck spectral sequence
\[
 \begin{aligned}
 E_2^{p,q}
 &=
 \cExt_{\Lambda R_0}^{p,\mathrm{gr}}
 \left(
 \mathbb{Q}_\ell,
 \left[
 \bigoplus_{\lambda\in\mathbb Z}
 \cExt_{\Lambda A_Q}^{q,\mathrm{gr}}
 \bigl(U^u,V^v\langle\lambda\rangle\bigr)
 \right]\langle\delta\rangle
 \right)
 \\
 &\Longrightarrow
 \cExt_{E}^{p+q,\mathrm{gr}}
 \bigl(U^u,V^v\langle\delta\rangle\bigr).
 \end{aligned}
\]
By the internal Ext vanishing above, all rows with $q>0$ vanish. Thus
\[
 \cExt_{E}^{n,\mathrm{gr}}
 \bigl(U^u,V^v\langle\delta\rangle\bigr)
 \simeq
 \cExt_{\Lambda R_0}^{n,\mathrm{gr}}
 \bigl(\mathbb{Q}_\ell,H\langle\delta\rangle\bigr)
 \qquad(n\ge0).
\]
Thus these Ext sheaves are computed by applying $\cHom_{\Lambda R_0}^{\mathrm{gr}}(-,H\langle\delta\rangle)$ to the Koszul resolution of $\mathbb{Q}_\ell$.

The $n$th term of the Koszul resolution is $P_n=\Lambda R_0\otimes_{\mathbb Q_\ell}\operatorname{Sym}^nR_0$ with free generators in degree $-n$.
A degree-preserving map $P_n\to H\langle\delta\rangle$
sends these generators to $H_{\delta-n}$, whose elements
are $\Lambda A_Q$-linear maps $U_i^u\longrightarrow V_{i+\delta-n}^v$. By Lemma~\ref{lem:weak-degree-bound}, 
\[\cHom_{\Lambda R_0}^{\mathrm{gr}}
 \bigl(P_n,H\langle\delta\rangle\bigr)=0, \qquad \text{if}\ \ \delta-n>-\Delta+v-u+2(t-r).
\]
Taking cohomology and using the preceding
Ext isomorphism gives
\[
\cExt_{E}^{n,\mathrm{gr}} \bigl(U^u,V^v\langle\delta\rangle\bigr)=0
 \qquad
 \text{if}\ \ n<\Delta+\delta+u-v+2(r-t)
\]

We reduce the global calculation to the strata.
Let
\[
 \iota:Z\hookrightarrow B_X,\qquad
 j:O=B_X\setminus Z\hookrightarrow B_X
\]
be a closed inclusion and its open complement.
The $E$-action on $V$ induces an $\iota^*E$-action
on $\iota^!V$. In fact,
\[
 \iota_*(\iota^*E\otimes\iota^!V)
 \simeq E\otimes\iota_*\iota^!V
 \longrightarrow E\otimes V
 \longrightarrow V.
\]
Pushing forward this action gives an $E$-module structure
on $\iota_*\iota^!V$ through
\[
 E\otimes\iota_*\iota^!V
 \simeq
 \iota_*(\iota^*E\otimes\iota^!V)
 \longrightarrow
 \iota_*\iota^!V.
\]
Similarly, $Rj_*j^*V$ is also an $E$-module.

Thus the localization triangle
 $\iota_*\iota^!V\longrightarrow V
 \longrightarrow Rj_*j^*V\xrightarrow{+1}$
is a triangle of $E$-modules. 
Applying $\RHom_E^{\mathrm{gr}}(U,-)$ and using adjunction gives a triangle
\[
 \RHom_{\iota^*E}^{\mathrm{gr}}
   (\iota^*U,\iota^!V)
 \longrightarrow \RHom_E^{\mathrm{gr}}(U,V)
 \longrightarrow
 \RHom_{j^*E}^{\mathrm{gr}}(j^*U,j^*V)
 \xrightarrow{+1}.
\]
Induction over strata reduces the desired
vanishing to the terms
\[
 \RHom_{E_S}^{\mathrm{gr}}(i_S^*U,i_S^!V),
 \qquad
 i_S:S\hookrightarrow B_X,\quad E_S=i_S^*E.
\]

We already get the vanishing $\cExt_{E_S}^{n,\mathrm{gr}}
(U^u,V^v\langle\delta\rangle)=0$  if  $n<\Delta+\delta+u-v+2(r-t)$. 
Since $r\ge t$, the vanishing implies that for all pairs $(u,v)$,
\[
 \mathcal H^e \cRHom_{E_S}^{\mathrm{gr}}
 \bigl(U^u[-u],V^v[-v]\langle\delta\rangle\bigr)=
 \cExt_{E_S}^{e+u-v,\mathrm{gr}}
 \bigl(U^u,V^v\langle\delta\rangle\bigr)=0
 \qquad\text{if}\ \ e<\Delta+\delta.
\]

The $E$-module truncation triangles
\[
 \tau_{\le v-1}(i_S^!V)
 \longrightarrow \tau_{\le v}(i_S^!V)
 \longrightarrow V^v[-v]\xrightarrow{+1}
\]
and the corresponding triangles for $i_S^*U$
are finite.
Applying internal derived Hom and inducting in both variables
therefore gives
\[
 \cRHom_{E_S}^{\mathrm{gr}}
 \bigl(i_S^*U,i_S^!V\langle\delta\rangle\bigr)
 \in D^{\ge\Delta+\delta}.
\]
Taking derived global sections preserves this lower bound.
The module localization triangles then give
\[
 \Ext^e_{E,\delta}(U,V)=0
 \qquad\text{if}\ \ e<\Delta+\delta
\]
on the whole base.

If $r\ge t-\kappa$, the same argument gives
\[
 \Ext^e_{E,\delta}(U,V)=0
 \qquad\text{if}\ \ e<\Delta+\delta-2\kappa.
\]
\end{proof}

\subsection{Obstruction to finer filtered morphisms}
Let
\[
\widetilde\beta_s:
F_\bullet^YK_Y\longrightarrow F_{\bullet+s}^XK_X
\]
be a filtered $\mathcal A$-linear lift of the fixed comparison $\beta$.
We improve its degree while retaining the filtered module structure and the identification of its underlying
morphism with $\beta$. Put $\Delta=d_Y-d_X$.
\begin{lemma}\label{lem:weak-rees-lowering}
The obstruction to factoring $\widetilde\beta_s$ through the transition morphism
\[
F_{\bullet+s-1}^XK_X\longrightarrow F_{\bullet+s}^XK_X
\]
in the category of filtered $\mathcal A$-modules is a class in
\[
\operatorname{Ext}^{-s}_{E,\Delta+s}(U,V).
\]
If this group vanishes, there exists a filtered $\mathcal A$-linear lift of $\beta$ of degree $s-1$
whose composite with the transition morphism is $\widetilde\beta_s$.
\end{lemma}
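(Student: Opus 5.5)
The plan is to treat the question as a standard obstruction problem for lifting along a cofiber sequence of filtered modules, and then to identify the obstruction group with a graded Ext group via the associated graded. First form the shifted filtered $\mathcal A$-module $G_\bullet:=F^X_{\bullet+s}K_X$ and its shift $G'_\bullet:=F^X_{\bullet+s-1}K_X$, with the transition morphism $G'\to G$ a morphism of filtered $F_\bullet\mathcal A$-modules. Its cofiber in $\operatorname{Mod}_{F_\bullet\mathcal A}(\operatorname{Seq}(\mathscr D(B_X)))$ is computed levelwise, since cofibers of modules are computed in the underlying stable category. The cofiber is the sequence $n\mapsto \operatorname{gr}^F_{n+s}F^XK_X$, and its transition maps are null in a coherent way. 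The filtered module action on it therefore factors through $\operatorname{gr}F_\bullet\mathcal A$: the cofiber is the restriction along $F_\bullet\mathcal A\to\operatorname{gr}F_\bullet\mathcal A$ of a graded module, namely the associated graded of $F^X_{\bullet+s}K_X$. The obstruction class is then the composite
\[
F^Y_\bullet K_Y\xrightarrow{\widetilde\beta_s}G\longrightarrow\operatorname{cofib}(G'\to G),
\]
and $\widetilde\beta_s$ factors through $G'$ exactly when this composite is null. The fiber sequence of mapping spaces
\[
\operatorname{Map}(F^Y,G')\to\operatorname{Map}(F^Y,G)\to\operatorname{Map}(F^Y,\operatorname{cofib})
\]
also shows that a choice of nullhomotopy produces a lift $\widetilde\beta_{s-1}$ whose composite with the transition morphism is $\widetilde\beta_s$.

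Second, use the adjunction between restriction along $F_\bullet\mathcal A\to\operatorname{gr}F_\bullet\mathcal A$ and the extension $\operatorname{gr}F_\bullet\mathcal A\otimes_{F_\bullet\mathcal A}(-)$. The latter is the associated graded functor $\operatorname{gr}$, because $\operatorname{gr}$ is symmetric monoidal \cite[Proposition~2.26]{GP} and $\operatorname{gr}M\simeq \operatorname{gr}F_\bullet\mathcal A\otimes_{F_\bullet\mathcal A}M$. This gives
\[
\pi_0\operatorname{Map}_{F_\bullet\mathcal A}(F^Y,\operatorname{cofib})\simeq\Hom_{D_{\mathrm{gr}}(\operatorname{gr}\mathcal A)}\bigl(\operatorname{gr}F^Y_\bullet K_Y,(\operatorname{gr}F^X_{\bullet+s}K_X)\bigr).
\]

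Third, convert this Hom into the normalized groups of Proposition~\ref{prop:weak-global-ext} by shearing and reindexing. By construction $\operatorname{gr}_n F^Y\simeq U_{n-d_Y}[-n]$, and the target in internal degree $n$ is $\operatorname{gr}_{n+s}F^X\simeq V_{n+s-d_X}[-(n+s)]$. Shearing $\mathsf S$ is an equivalence of module categories intertwining $\operatorname{gr}\mathcal A$ with $E$. Under it, the source becomes $U$ reindexed by $-d_Y$, and the target becomes $V$ shifted internally by $s-d_X+d_Y=\Delta+s$ relative to that indexing. Since the cohomological shift on the target is $[-(n+s)]$ against $[-n]$ on the source, the extra shift is $[-s]$. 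The group is therefore $\Hom_{D_{\mathrm{gr}}(E)}(U,V\langle\Delta+s\rangle[-s])=\Ext^{-s}_{E,\Delta+s}(U,V)$. If it vanishes, the obstruction is zero, and the lift exists with its colimit still identified with $\beta$: the colimit of the composite with the transition map is unchanged, because transition maps become equivalences in the colimit.

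The main obstacle is the sign and shift bookkeeping in the shearing step. One has to check that the Koszul signs make $\mathsf S$ a monoidal equivalence carrying $\operatorname{gr}\mathcal A$ to an algebra quasi-isomorphic to $E$. One must also verify that the indices come out as $-s$ and $\Delta+s$ rather than, say, $-s\pm1$ from the cofiber convention $\operatorname{cofib}(F_{n-1}\to F_n)$. I would fix this by testing the identification in the trivial case $\mathcal A=\mathbb Q_\ell$ with one-step filtrations, and only then promote it to the general case.
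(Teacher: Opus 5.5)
Your argument is correct and is the paper's proof, phrased in filtered rather than Rees language. Your cofiber of the transition map is the paper's $\bar N\langle s\rangle=N\langle s\rangle/t$, your base change along $F_\bullet\mathcal A\to\operatorname{gr}F_\bullet\mathcal A$ is its derived extension of scalars along $R\to\bar R=R\otimes^{\mathbf L}_{\mathbb Q_\ell[t]}\mathbb Q_\ell$, your colimit argument is its specialization at $t=1$, and your shearing bookkeeping ($U_i\to V_{i+\Delta+s}[-s]$) lands in the same group $\Ext^{-s}_{E,\Delta+s}(U,V)$.
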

\begin{proof}
Form the Rees algebra and modules
\[
R=\bigoplus_n F_n\mathcal A\,t^n,
M=\bigoplus_n a^*({}^p\tau_{\le n}K_Y)t^n,
N=\bigoplus_n({}^p\tau_{\le n}K_X)t^n.
\]
Multiplication by $t$ is the filtration transition map.
The Rees construction represents the filtered lift by a morphism
\[
\Phi_s\in
\Hom_{D_{\mathrm{gr}}(R)}(M,N\langle s\rangle)
=
H^0R\Gamma\bigl(
B_X,\cRHom_R^{\mathrm{gr}}(M,N\langle s\rangle)
\bigr).
\]
Here $R$ is an internally graded algebra object; its cohomological and internal degrees are distinct. All quotients below are derived; in particular, $\bar R=R\otimes^{\mathbf L}_{\mathbb Q_\ell[t]}\mathbb Q_\ell$,
and likewise $\bar M=M/t$ and $\bar N=N/t$, with $t$ acting by zero
on $\mathbb Q_\ell$. Then
\[
 \bar R_{-k}\simeq\Lambda^kT(Q)[k],\qquad
 \bar M_n\simeq a^*({}^p\mathcal H^nK_Y)[-n],\qquad
 \bar N_n\simeq{}^p\mathcal H^nK_X[-n].
\]
Their sheared algebra and modules are precisely $E,U,V$ defined
above. After forgetting the grading, specialization at $t=1$ gives
\[
 M/(t-1)\simeq a^*K_Y,\qquad
 N/(t-1)\simeq K_X,\qquad \Phi_s|_{t=1}=\beta.
\]

The triangle in $D_{\mathrm{gr}}(R)$
\[
 N\langle s-1\rangle\xrightarrow{t}N\langle s\rangle
 \xrightarrow{q_s}\bar N\langle s\rangle\xrightarrow{+1}
\]
shows that the obstruction to lowering $s$ is
$o(\Phi_s)=[q_s\Phi_s]$.
Derived extension of scalars and shearing give
\[
\cRHom_R^{\mathrm{gr}}(M,\bar N\langle s\rangle)
\simeq\cRHom_{\bar R}^{\mathrm{gr}}(\bar M,\bar N\langle s\rangle)
\simeq\cRHom_E^{\mathrm{gr}}(U,V\langle\Delta+s\rangle[-s]).
\]
Indeed, writing
$n=d_Y+i$, a component of the middle expression is
\[
 a^*({}^p\mathcal H^nK_Y)[-n]
 \longrightarrow{}^p\mathcal H^{n+s}K_X[-n-s],
 \qquad n+s=d_X+(i+\Delta+s),
\]
which becomes $U_i\to V_{i+\Delta+s}[-s]$ after shifting by $[n]$.
Taking $H^0R\Gamma(B_X,-)$ therefore places $o(\beta_s)$ in
$\Ext^{-s}_{E,\Delta+s}(U,V)$.
If this group vanishes, the long exact Hom sequence gives
\[
 \Phi_{s-1}:M\longrightarrow N\langle s-1\rangle,
 \qquad t\Phi_{s-1}=\Phi_s.
\]
Under the Rees correspondence, this gives the required filtered module morphism of degree $s-1$.
Specialization at $t=1$ shows that its underlying comparison is still $\beta$.
\end{proof}

\subsection{Conclusion of the main argument}

\begin{lemma}\label{lem:weak-rational-descent}
Under the geometric hypotheses, suppose that, for some $s\in\mathbb Z$ and every $n\in\mathbb Z$, the
natural comparison with $\mathbb Q_\ell$-coefficients induces factorizations
\[
a^*(\ptau_{\le n}K_{Y,\mathbb Q_\ell})
\longrightarrow
\ptau_{\le n+s}K_{X,\mathbb Q_\ell}
\longrightarrow K_{X,\mathbb Q_\ell}.
\]
Then the corresponding factorizations exist over $\mathbb Q$. They are unique in the ordinary derived
category and commute with the truncation transition maps.
\end{lemma}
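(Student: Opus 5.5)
The plan is to reduce the question over $\mathbb Q$ to a statement about split monomorphisms and Hom spaces that is insensitive to field extension. First, by the decomposition theorem applied to the projective maps $h_X$ and $h_Y$ with rationally smooth sources (the Remark after Definition~\ref{def:weak}), over $\mathbb Q$ we have splittings
\[
K_X\simeq\bigoplus_j\pH^j(K_X)[-j],\qquad \ptau_{\le m}K_X\simeq\bigoplus_{j\le m}\pH^j(K_X)[-j],
\]
so $\iota_m:\ptau_{\le m}K_X\to K_X$ is split mono with complementary summand $\ptau_{>m}K_X$, and similarly for $K_Y$. A morphism $\phi:C\to K_X$ factors through $\iota_m$ if and only if the composite $C\to K_X\to\ptau_{>m}K_X$ (the canonical truncation map) vanishes. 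Moreover, since $\iota_m$ is split mono, any such factorization is unique. Uniqueness then gives commutation with the transition maps: both ways of going around the square factor the same map through the monomorphism $\iota_{m+1}$.

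Apply this with $C=a^*\ptau_{\le n}K_Y$, $\phi=\beta\circ a^*\iota_n$ and $m=n+s$. The obstruction is the class
\[
o_n\in\Hom_{\Db(B_X,\mathbb Q)}\bigl(a^*\ptau_{\le n}K_Y,\ \ptau_{>n+s}K_X\bigr).
\]
Its formation commutes with extension of scalars $-\otimes_{\mathbb Q}\mathbb Q_\ell$. This holds because perverse truncation, $a^*$ and the base change map $\beta$ are compatible with change of coefficients; for the classical (non-pro-\'etale) comparison one uses the standard identification of $\Db(B,\mathbb Q)\otimes\mathbb Q_\ell$-constructible sheaves via the comparison isomorphism between analytic and \'etale cohomology, as in BBD. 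The hypothesis says exactly that $o_n\otimes\mathbb Q_\ell=0$.

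It therefore remains to show that
\[
\Hom(C,D)\otimes_{\mathbb Q}\mathbb Q_\ell\to\Hom(C_{\mathbb Q_\ell},D_{\mathbb Q_\ell})
\]
is injective for constructible $C,D$. I would deduce this from
\[
\Hom(C,D)=H^0R\Gamma(B_X,\cRHom(C,D))
\]
together with $\cRHom(C,D)_{\mathbb Q_\ell}\simeq\cRHom(C_{\mathbb Q_\ell},D_{\mathbb Q_\ell})$ for constructible complexes. Flatness of $\mathbb Q\to\mathbb Q_\ell$ and finiteness of constructible hypercohomology then give an isomorphism, a fortiori an injection. Hence $o_n=0$ over $\mathbb Q$, the factorization exists, and uniqueness and transition compatibility follow as above.

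The main obstacle is the comparison between $\Db(B,\mathbb Q)$ in the classical topology and the $\ell$-adic category in which the earlier lemmas work. Specifically, I need the obstruction class, and in particular $\beta$, to correspond under the Artin comparison isomorphism, and I need $\ptau$ to commute with it. I would handle this by noting that all objects involved are direct images of constant sheaves along algebraic maps, where Artin comparison is functorial for $f_*$, $f^*$ and $\cRHom$, and that the perverse t-structures match because their definition uses only these functors.
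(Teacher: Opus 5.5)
Your proposal is correct and is essentially the paper's argument written out: the paper's one-line proof cites the BBD \S6.1 comparison together with faithful flatness of $\mathbb Q_\ell/\mathbb Q$. Your obstruction class in $\Hom\bigl(a^*\ptau_{\le n}K_Y,\ptau_{>n+s}K_X\bigr)$, its injective image under $-\otimes_{\mathbb Q}\mathbb Q_\ell$, and the uniqueness and transition compatibility coming from the split monomorphisms in the Remark after Definition~\ref{def:weak} are precisely the details behind that citation.
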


\begin{proof}
This follows from the standard comparison argument
of \cite[§6.1]{BBD} and faithful flatness of $\mathbb{Q}_{\ell}/\mathbb{Q}$.
\end{proof}

\begin{proof}[Proof of Theorem~\ref{thm:common-action-weak}]
On a fixed stratum $S$ of dimension $t$ on which $r(b)$ is a constant $r$, the stratum is contained in $\{b:r(b)\le r\}$.
The rank hypothesis therefore gives $t\le r$. Lemmas~\ref{lem:weak-actual-action}--\ref{lem:weak-initial-lift} give the natural $\mathcal A$-module morphism $\beta$ and a filtered $\mathcal A$-linear lift
\[
\widetilde\beta_{d_X}:
F_\bullet^YK_Y\longrightarrow F_{\bullet+d_X}^XK_X.
\]
Lemma~\ref{lem:weak-stalk-costalk} and
Proposition~\ref{prop:weak-costalk-free} provide the geometric inputs
to Lemma~\ref{lem:weak-degree-bound}; Proposition~\ref{prop:weak-global-ext}
then gives $\Ext^e_{E,\delta}(U,V)=0$ for $e<\Delta+\delta$.

For $s>-\Delta$, put $\delta=\Delta+s>0$. The obstruction in
Lemma~\ref{lem:weak-rees-lowering} vanishes because $-s=\Delta-\delta<\Delta+\delta$.

Starting at $s=d_X$, we lower the degree finitely many times to $s=-\Delta=d_X-d_Y$.
Forgetting the module structure and passing to the homotopy category gives factorizations
\[
a^*(\ptau_{\le n}K_{Y,\mathbb Q_\ell})
\longrightarrow
\ptau_{\le n+d_X-d_Y}K_{X,\mathbb Q_\ell}
\longrightarrow K_{X,\mathbb Q_\ell}
\]
of the natural comparison for every $n$.

Lemma~\ref{lem:weak-rational-descent} gives the corresponding factorizations over $\mathbb Q$. Setting $n=d_Y+k$ proves \eqref{eq:rhm-weak-towers}, and hence compatibility in the sense of Definition~\ref{def:weak}. The assertion about the perverse filtration follows by hypercohomology.
\end{proof}

Keep the geometric hypotheses preceding
Theorem~\ref{thm:common-action-weak}, but replace its rank inequality by
\begin{equation}\label{eq:common-rank-defect}
 \dim\{b\in B_X:r(b)\le c\}\le c+\kappa\qquad(c\ge0),
 \qquad\kappa\in\mathbb Z_{\ge0}.
\end{equation}
\begin{corollary}\label{cor:common-action-rank-defect}
The natural comparison morphism admits compatible factorizations
\begin{equation}\label{eq:common-rank-defect-towers}
 a^*\ptau_{\le d_Y+k}K_Y\longrightarrow
 \ptau_{\le d_X+k+\kappa}K_X\qquad(k\in\mathbb Z).
\end{equation}
In particular,
$F^*P_kH^m(Y,\Q)\subseteq P_{k+\kappa}H^m(X,\Q)$.

\end{corollary}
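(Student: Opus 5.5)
The plan is to rerun the proof of Theorem~\ref{thm:common-action-weak} word for word, changing only the numerical input at the one place where the rank inequality is used. None of the geometric results depends on the rank hypothesis, so they apply unchanged: Lemma~\ref{lem:weak-actual-action}, Lemma~\ref{lem:weak-adams-algebra}, Lemma~\ref{lem:weak-stalk-costalk}, Proposition~\ref{prop:weak-costalk-free} and Lemma~\ref{lem:weak-degree-bound}. In particular Lemma~\ref{lem:weak-initial-lift} again supplies the $\mathcal A$-module morphism $\beta$ and its filtered $\mathcal A$-linear lift $\widetilde\beta_{d_X}$ of degree $d_X$.

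\emph{Step 1: the stratumwise rank bound.} Fix a stratum $S$ of dimension $t$ on which $r(b)$ equals a constant $r$. Then $S\subset\{b:r(b)\le r\}$, so \eqref{eq:common-rank-defect} gives $t\le r+\kappa$, that is, $r\ge t-\kappa$. This is exactly the hypothesis of the ``more generally'' clause of Proposition~\ref{prop:weak-global-ext}. That clause gives
\[
\Ext^e_{E,\delta}(U,V)=0\qquad\text{for } e<\Delta+\delta-2\kappa .
\]
The clause follows from the stratum estimate $n<\Delta+\delta+u-v+2(r-t)$, with $2(r-t)\ge-2\kappa$, fed through the same truncation and localization induction.

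\emph{Step 2: lowering the degree.} Start from $\widetilde\beta_{d_X}$ and apply Lemma~\ref{lem:weak-rees-lowering} repeatedly. At degree $s$ the obstruction lies in $\Ext^{-s}_{E,\Delta+s}(U,V)$. Put $\delta=\Delta+s$, so that $-s=\Delta-\delta$. The vanishing range of Step~1 contains the obstruction degree exactly when
\[
\Delta-\delta<\Delta+\delta-2\kappa,\qquad\text{equivalently}\qquad \delta>\kappa,\qquad\text{equivalently}\qquad s>\kappa-\Delta .
\]
So every step with $s>\kappa-\Delta$ is unobstructed. Lowering finitely many times, we arrive at a filtered lift of degree $s=\kappa-\Delta=d_X-d_Y+\kappa$, and its colimit is still identified with $\beta$.

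\emph{Step 3: conclusion.} Forget the module structure and pass to the homotopy category. This gives factorizations
\[
a^*\ptau_{\le n}K_{Y,\mathbb Q_\ell}\longrightarrow\ptau_{\le n+d_X-d_Y+\kappa}K_{X,\mathbb Q_\ell}\longrightarrow K_{X,\mathbb Q_\ell}
\]
for every $n$. Lemma~\ref{lem:weak-rational-descent} descends them to $\mathbb Q$, and there they are unique and commute with the transition maps. Setting $n=d_Y+k$ yields \eqref{eq:common-rank-defect-towers}. Taking hypercohomology gives $F^*P_kH^m(Y,\Q)\subseteq P_{k+\kappa}H^m(X,\Q)$.

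The only point requiring care is the bookkeeping in Step~2. The vanishing range from Proposition~\ref{prop:weak-global-ext} loses $2\kappa$. Because the obstruction degree $-s$ and the range bound $\Delta+\delta$ both move with $s$, this costs only $\kappa$ in the final index, which matches the stated $\ptau_{\le d_X+k+\kappa}$.
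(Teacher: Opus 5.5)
Your proposal is the paper's proof. It has the same four ingredients: the stratumwise bound $r\ge t-\kappa$; the weakened vanishing $\Ext^e_{E,\delta}(U,V)=0$ for $e<\Delta+\delta-2\kappa$; repeated use of Lemma~\ref{lem:weak-rees-lowering} while $s>\kappa-\Delta$; and then forgetting structure and descending to $\mathbb Q$. The one thing you skip is the case $\kappa>d_Y$: there the target degree $\kappa-\Delta=d_X+(\kappa-d_Y)$ exceeds the starting degree $d_X$, so there is nothing to lower, and the paper instead composes $\widetilde\beta_{d_X}$ from Lemma~\ref{lem:weak-initial-lift} with the truncation transition maps.
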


\begin{proof}
On a fixed stratum $S$, $r\ge t-\kappa$, so the proof of
Proposition~\ref{prop:weak-global-ext} gives
\[
\operatorname{Ext}^e_{E,\delta}(U,V)=0
 \qquad(e<\Delta+\delta-2\kappa).
\]
If $\kappa\ge d_Y$, Lemma~\ref{lem:weak-initial-lift}, followed by the canonical truncation transition maps,
gives the factorizations over $\mathbb Q_\ell$. They descend to $\mathbb Q$ by Lemma~\ref{lem:weak-rational-descent}.

Assume now that $0\le\kappa<d_Y$. For $s>\kappa-\Delta$, we have
\[
-s<\Delta+(\Delta+s)-2\kappa.
\]
Lemma~\ref{lem:weak-rees-lowering} therefore successively improves the filtered degree to $\kappa-\Delta$,
giving a filtered $\mathcal A$-linear lift
\[
\widetilde\beta_{\kappa-\Delta}:
F_\bullet^YK_Y
\longrightarrow F_{\bullet+\kappa-\Delta}^XK_X.
\]
Forgetting the module structure and passing to the homotopy category gives the corresponding truncation
factorizations over $\mathbb Q_\ell$. Lemma~\ref{lem:weak-rational-descent} descends these
factorizations to $\mathbb Q$. Writing $n=d_Y+k$ proves \eqref{eq:common-rank-defect-towers}.
\end{proof}

\section{Multiplicative perverse truncations for Hitchin Systems}\label{sec:multiplicativity}

\subsection{The strongly parabolic Hitchin systems}\label{subsec:hitchin-geometry}
Let $G$ be a connected reductive group, $C$ a smooth connected projective curve, and $D\subset C$ a reduced divisor, possibly empty. Put $L=K_C(D)$.  Fix a Borel subgroup $\mathsf B\subset G$ and put
$\mathfrak b=\Lie\mathsf B$, $\mathfrak n=\Lie R_u(\mathsf B)$,
where $R_u(\mathsf B)$ is the unipotent radical. Let $\mathcal G$ be the Iwahori group scheme on $C$ determined by
\[
 \mathcal G|_{C\setminus D}=G\times(C\setminus D),\qquad
 \mathcal G(\widehat{\mathcal O}_{C,p})
 =\{g\in G(\widehat{\mathcal O}_{C,p}):g(p)\in\mathsf B\}
 \quad(p\in D).
\]
For $D=\varnothing$, take $\mathcal G=G\times C$.
Let $\operatorname{Bun}_{\mathcal G}$ be the moduli stack of $\mathcal G$-bundles. For a scheme $S$, an object of $\operatorname{Bun}_{\mathcal G}(S)$ can be interpreted as a principal $G$-bundle $E$ on $C\times S$ together with sections
$\sigma_p:S\longrightarrow(E/\mathsf B)|_{\{p\}\times S}, \ \  p\in D$, see \cite[Section~1, Example~(2)]{Hei10} and \cite[Theorem~4.8 and Example~4.9]{MRR23}. Morphisms are bundle isomorphisms preserving these sections. Set $\ad E:=E\times^G\mathfrak g$, where $\mathfrak g=\Lie G$
and $G$ acts on $\mathfrak g$ by the adjoint representation. Put $E_p:=E|_{\{p\}\times S}$. Pulling back the
$\mathsf B$-torsor $E_p\to E_p/\mathsf B$ along the section $\sigma_p:S\to E_p/\mathsf B$ gives
\[
E_{\mathsf B,p}
:=S\times_{\sigma_p,\,E_p/\mathsf B}E_p.
\] For a geometric object $(E,\sigma)$, define
\[
\mathfrak b_p:=E_{\mathsf B,p}\times^{\mathsf B}\mathfrak b
\subset(\ad E)_p,
\qquad
\mathfrak n_p:=E_{\mathsf B,p}\times^{\mathsf B}\mathfrak n
\subset\mathfrak b_p,
\]
where both contracted products use the adjoint action of $\mathsf B$. With $i_p:\{p\}\hookrightarrow C$, set
\[
\begin{aligned}
 \operatorname{ParAd}(E)
 &=\ker\!\left(\ad E\longrightarrow
       \bigoplus_{p\in D}i_{p*}((\ad E)_p/\mathfrak b_p)\right),\\
 \operatorname{SParAd}(E)
 &=\ker\!\left(\ad E\longrightarrow
       \bigoplus_{p\in D}i_{p*}((\ad E)_p/\mathfrak n_p)\right).
\end{aligned}
\]
For the $\mathcal G$-torsor $\mathcal E$ corresponding to
$(E,\sigma)$, we can also define the adjoint
bundle using the Lie algebra of the group scheme $\mathcal G$. In fact,
$\operatorname{ad}(\mathcal E)=\operatorname{ParAd}(E)$.
\begin{definition}
    A \emph{strongly parabolic Higgs bundle} is a triple $(E,\sigma,\phi)$ with
\begin{equation}\label{eq:strongly-parabolic-field}
 \phi\in H^0(C,\operatorname{SParAd}(E)\otimes L),
 \qquad\text{equivalently}\qquad
 \operatorname{Res}_p\phi\in\mathfrak n_p\quad(p\in D).
\end{equation}
\end{definition}
For a scheme $S$, let $\mathcal M_{\mathcal G}(S)$ be the
groupoid of triples $(E,\sigma,\phi)$, where $(E,\sigma)$
is an object of $\operatorname{Bun}_{\mathcal G}(S)$ and
\[
 \phi\in H^0(C\times S,\ad E\otimes\operatorname{pr}_C^*L),
 \qquad
 \operatorname{Res}_p\phi
 \in H^0(S,E_{\mathsf B,p}\times^{\mathsf B}\mathfrak n)
 \quad(p\in D).
\]
Here $\operatorname{pr}_C:C\times S\to C$ is the projection, and $\operatorname{Res}_p$ denotes the relative residue along $\{p\}\times S$. Morphisms are isomorphisms of $G$-bundles preserving the sections and the Higgs fields. For a morphism $f:S'\to S$, pullback is obtained by pulling back $E$ and $\phi$ along $\mathrm{id}_C\times f$ and each flag section $\sigma_p$ along $f$.

Choose a nondegenerate invariant symmetric form $\kappa$ on $\mathfrak g$. Since $\mathfrak b_p^\perp=\mathfrak n_p$,
\begin{equation}\label{eq:parabolic-cotangent}
 \operatorname{ParAd}(E)^\vee\otimes K_C
 \simeq\operatorname{SParAd}(E)\otimes L,
 \qquad
 \mathcal M_{\mathcal G}
 \simeq T^*\operatorname{Bun}_{\mathcal G},
\end{equation}
where $T^*\operatorname{Bun}_{\mathcal G}$ denotes the cotangent stack of $\operatorname{Bun}_{\mathcal G}$. Let $\mathfrak c_G=\mathfrak g/\!/G$. Choose homogeneous generators of $\C[\mathfrak g]^G$ of degrees $d_1,\ldots,d_{\rk G}$. The Hitchin base is
\begin{equation}\label{eq:hitchin-base}
B_G(C,D)=\bigoplus_{i=1}^{\rk G}H^0\bigl(C,L^{d_i}(-D)\bigr).
\end{equation}
For $D=\varnothing$, we call it \emph{the canonical Hitchin base}. Denote the chosen invariant generators by $f_i$. The Hitchin morphism is
\[
 h(E,\sigma,\phi)=(f_i(\phi))_i\in B_G(C,D),
 \qquad f_i(\phi)|_D=f_i(\operatorname{Res}_D\phi)=0.
\]
Thus its target is \eqref{eq:hitchin-base}. For $D=\varnothing$, the definitions give $\operatorname{ParAd}(E)=\operatorname{SParAd}(E)=\ad E$. For more details, see
\cite{Yun}.

Fix the class $\tau\in\pi_1(G)$ of the underlying topological
bundle of $E$. Choose a maximal torus $T\subset\mathsf B$, with cocharacter lattice $X_*(T):=\operatorname{Hom}(\mathbb G_m,T)$.
Let $\mathfrak A\subset X_*(T)\otimes_{\mathbb Z}\mathbb R$ be the open Weyl alcove whose closure contains $0$ and whose
associated parabolic subgroup is $\mathsf B$, using the conventions of \cite{BGM}.
Fix rational parabolic weights $\boldsymbol\alpha=(\alpha_p)_{p\in D}$ with
\[
 \alpha_p\in\mathfrak A\cap
 \bigl(X_*(T)\otimes_{\mathbb Z}\mathbb Q\bigr)
 \qquad(p\in D).
\]
For $D=\varnothing$ the weight tuple is empty. Parabolic degree and
(semi)stability are defined by \cite{BGM},
using holomorphic reductions of the flagged bundle above and the central parameter fixed by its parabolic degrees. We denote the $\boldsymbol\alpha$-semistable and stable loci of type $\tau$ by
\[
 \mathcal M_{\mathcal G}^{\tau,ss}(\boldsymbol\alpha),\qquad
 \mathcal M_{\mathcal G}^{\tau,s}(\boldsymbol\alpha)
 \subset\mathcal M_{\mathcal G}.
\]

We write $M_G(C,D)$ for the coarse moduli space of the semistable locus, with the fixed parameters $\tau$ and $\boldsymbol\alpha$ omitted from the notation.

\subsubsection{Deformations and symplectic forms}

For $x=(E,\sigma,\phi)$, put
\[
 \mathcal C_x^\bullet=
 \left[\operatorname{ParAd}(E)\xrightarrow{[\phi,-]}
             \operatorname{SParAd}(E)\otimes L\right],
 \qquad\deg\mathcal C_x^\bullet=0,1.
\]
The deformation calculation of \cite[Section~4.1]{BKV} gives
\begin{equation}\label{eq:parabolic-deformations}
\begin{gathered}
 \Lie\operatorname{Aut}(x)=\mathbb H^0(C,\mathcal C_x^\bullet),
 \qquad T_x\mathcal M_{\mathcal G}=\mathbb H^1(C,\mathcal C_x^\bullet),\\
 \mathbb H^i(C,\mathcal C_x^\bullet)
 \simeq\mathbb H^{2-i}(C,\mathcal C_x^\bullet)^\vee.
\end{gathered}
\end{equation}
The group $\mathbb H^2(C,\mathcal C_x^\bullet)$ is the obstruction space for extending a deformation of
$x=(E,\sigma,\phi)$ from an Artinian base to an infinitesimal thickening of that base, with $(C,D)$ fixed. Set
\[
 \mathcal C_{x,\mathrm{cen}}^\bullet
 :=
 [\mathfrak z_G\otimes\mathcal O_C
  \xrightarrow{0}\mathfrak z_G\otimes K_C].
\]
Let $\mathcal C_{x,\mathrm{der}}^\bullet
:=\ker(\mathcal C_x^\bullet\to
\mathcal C_{x,\mathrm{cen}}^\bullet)$,
where the map is induced by the projection onto
$\mathfrak z_G$ along $[\mathfrak g,\mathfrak g]$.
If $\Lie\operatorname{Aut}(x)=\mathfrak z_G$, then
\[
 \mathbb H^0(C,\mathcal C_{x,\mathrm{der}}^\bullet)
 =
 \mathbb H^2(C,\mathcal C_{x,\mathrm{der}}^\bullet)
 =0.
\]
Since $[\mathfrak z_G,\mathfrak g]=0$,
the above vanishing and
\cite[Propositions~5.47 and~5.49]{Manetti04}
imply that $\mathcal M_{\mathcal G}$ is smooth at $x$.

Let $Z_G^0$ be the identity component of the center of $G$, with Lie algebra $\mathfrak z_G$, and let
$r:\mathcal M_{\mathcal G}^{\tau,s}(\boldsymbol\alpha)\to\mathfrak X$ be its rigidification, in the sense of \cite[Appendix~A, Theorem~A.1]{AOV}:
\[
\operatorname{Aut}_{\mathfrak X}(r(x))=\operatorname{Aut}(x)/Z_G^0.
\]
In other words, $r:\mathcal M_{\mathcal G}^{\tau,s}(\boldsymbol\alpha)\to\mathfrak X$ is a gerbe banded by $Z_G^0$. Hence, $\mathfrak X$ is smooth. Since the automorphism groups have zero Lie algebra, $\mathfrak X$ is a Deligne--Mumford stack. 

Under \eqref{eq:parabolic-cotangent}, let $\lambda_\kappa$ be the Liouville one-form on $\mathcal M_{\mathcal G}$, and set $\omega_\kappa=d\lambda_\kappa$. The restriction of $\omega_\kappa$ to the stable locus descends to a closed nondegenerate two-form on $\mathfrak X$; see \cite[Sections~2 and~3.1--3.2]{BGL15} and \cite[Section~4.1, equation~(7)]{BKV}. We omit the subscript $\kappa$ when the form is fixed.

\subsection{Regular centralizers and Picard actions}

Let $\chi:\mathfrak g\to\mathfrak c_G$ be the adjoint quotient. Ng\^o constructs a smooth commutative regular-centralizer group scheme $J_G\to\mathfrak c_G$ and a homomorphism $\chi^*J_G\to I_G$ to the universal automorphism centralizer on $\mathfrak g$ \cite[Section~2.1]{Ngo}. Twisting by the Higgs coefficient line bundle and pulling back along a characteristic $b$ gives a group scheme $J_b\to C$. For Borel reductions, Yun constructs the compatible homomorphism to the flag-preserving centralizer and the corresponding Picard action \cite[Lemmas~2.3.1--2.3.3]{Yun}. Write $B=B_G(C,D)$ and let $J\to C\times B$ be the regular centralizer above. The relative Picard stack is, by definition,
\[
 \mathcal P(S)=\{J|_{C\times S}\text{-torsors on }C\times S\},
 \qquad S\longrightarrow B,
\]
with torsor isomorphisms as morphisms and contracted product as the group law. It is smooth over $B$
\cite[Definition~4.13 and Proposition~4.16]{DCFFM}.
For $x=(E,\phi,(E_{\mathsf B,p})_p)$ and $b=h(x)$, by \cite[Lemmas~2.3.1--2.3.3]{Yun}, the centralizer
homomorphisms $j_G,j_{\mathsf B}$ give
\begin{equation}\label{eq:parabolic-torsor-action}
 \mathcal T\cdot x
 =\left(\mathcal T\times^{J_b,j_G}(E,\phi),
       (\mathcal T|_p\times^{J_b|_p,j_{\mathsf B}}E_{\mathsf B,p})_p\right).
\end{equation}
Here $E_{\mathsf B,p}$ is the Borel torsor defined by $\sigma_p$,
$j_G:J_b\to\underline{\operatorname{Aut}}_C(E,\phi)$ is the
centralizer homomorphism, and $j_{\mathsf B}$ is its flag-preserving
factorization at $p$.
That action preserves $\mathfrak b_p\to\mathfrak b_p/\mathfrak n_p$; its zero fiber is
exactly the residue condition \eqref{eq:strongly-parabolic-field}.

Let $P=(\pi_0^{\mathrm{fppf}}\mathcal P)^0$ whenever this identity component is represented by a smooth group scheme; here
$\pi_0^{\mathrm{fppf}}\mathcal P$ is the fppf sheafification of the
presheaf of isomorphism classes. Set $\mathcal P^0=\mathcal P\times_{\pi_0^{\mathrm{fppf}}\mathcal P}P$
and write $\rho:\mathcal P^0\to P$ for the projection.
Under the finite-inertia and component-preservation hypotheses in Assumption~\ref{assum:parabolic-application}(1)--(2),
the torsor action \eqref{eq:parabolic-torsor-action} induces the compatible actions \begin{equation}\label{eq:parabolic-stack-action}
\begin{tikzcd}
 \mathcal P^0\times_B\mathfrak X\ar[r,"\widetilde c"]
       \ar[d,"\rho\times q"'] &\mathfrak X\ar[d,"q"]\\
 P\times_B X\ar[r,"c"']&X,
\end{tikzcd}
\end{equation}
where $q:\mathfrak X\to X$ is the coarse moduli morphism
\cite[Section~3]{AOV}.
The upper action is by $\mathcal P^0$. An action of $P$ itself on
$\mathfrak X$ would require further descent: for example,
\cite[Corollary~5.35]{DCFFM} rigidifies by the whole $Z_G$ and assumes
$\deg L>0$.

\subsubsection{The Hamiltonian identity}

Set $\mathcal W=\bigoplus_iL^{d_i}(-D)$, so $B=H^0(C,\mathcal W)$.
The regular-centralizer identity \cite[Proposition~4.12(iii)]{DCFFM}
and Serre duality give
\begin{equation}\label{eq:parabolic-cotangent-centralizer}
\begin{gathered}
 \Lie J_b\simeq\bigoplus_iL^{1-d_i}=\mathcal W^\vee\otimes K_C,\\
 \eta_b:H^1(C,\Lie J_b)\xrightarrow{\sim}H^0(C,\mathcal W)^\vee=T_b^*B.
\end{gathered}
\end{equation}
Differentiating $j_G,j_{\mathsf B}$ and the invariant polynomials gives
maps of complexes
\[
\Lie J_b[0]\xrightarrow{(dj_\phi,0)}\mathcal C_x^\bullet
\xrightarrow{(0,d\chi_\phi)}\mathcal W[-1].
\]
The two maps of complexes are dual under the pairing induced by $\kappa$, as expressed by the local identity
\[
\kappa(dj_\phi(s),v)=\langle s,d\chi_\phi(v)\rangle
 \quad(s\in\Lie J_b,\ v\in\operatorname{SParAd}(E)\otimes L);
\]
see \cite[Lemma~A.9 and Remark~A.11]{DCHMSupport} and \cite[Lemma~7.11]{DCFFM}. For the corresponding global duality in the unmarked
$K_C$-valued case, see \cite[Proposition~A.12]{DCHMSupport}.

For $\xi\in H^1(C,\Lie J_b)$, let $\xi^\#\in T_x\mathfrak X$ be the tangent vector obtained by acting on $x$ with the first-order deformation of the trivial $J_b$-torsor represented by $\xi$.
Taking first hypercohomology of the two maps above gives
\[
 H^1(C,\Lie J_b)
 \xrightarrow{\xi\mapsto\xi^\#}
 T_x\mathfrak X
 \xrightarrow{dh_x}
 T_bB=H^0(C,\mathcal W),
\]
where $dh_x$ is the differential of the Hitchin map. Hence
\begin{equation}\label{eq:parabolic-hamiltonian}
\omega_x(\xi^\#,w)=-\langle\eta_b(\xi),dh_x(w)\rangle, \qquad \xi\in H^1(C,\Lie J_b),\quad w\in T_x\mathfrak X.
\end{equation}
Let $p:C\times B\to B$ be the projection. When the differential of
$\rho$ induces an isomorphism
$\theta:R^1p_*(\Lie J)\xrightarrow{\sim}\Lie(P/B)$,
\eqref{eq:parabolic-cotangent-centralizer} gives
$\eta:\Lie(P/B)\xrightarrow{\sim}\Omega_B^1$ via $\theta^{-1}$.
Then \eqref{eq:parabolic-hamiltonian} reads
$\iota_{\xi^\#}\omega=-h^*\eta(\xi)$ for local sections
$\xi$ of $\Lie(P/B)$.

\subsection{Geometric hypotheses for the applications}
\label{subsec:parabolic-application-hypotheses}

The following conditions are used in multiplicativity and in Theorem~\ref{thm:intro-weak}.
They concern the complete semistable moduli space in the chosen topological type and with the specified weights.
\begin{assumption}\label{assum:parabolic-application}
Let $X$ be a chosen Higgs moduli space with Hitchin base $B=B_G(C,D)$, and put $d=\dim B$. Assume the following conditions are satisfied:
\begin{enumerate}
\item The chosen component satisfies
\[
 \varnothing\ne\mathcal M_{\mathcal G}^{\tau,ss}(\boldsymbol\alpha)
 =\mathcal M_{\mathcal G}^{\tau,s}(\boldsymbol\alpha).
\]
The rigidified stack $\mathfrak X$ above has finite inertia, meaning
that
\[
 I_{\mathfrak X}:=
 \mathfrak X\times_{\mathfrak X\times\mathfrak X}\mathfrak X
 \longrightarrow\mathfrak X
 \quad\text{is finite}.
\]
Its coarse moduli space is $X$.
\item The action of $\mathcal P^0$ by twisting with $J_b$-torsors preserves the chosen stable component $\mathcal M_{\mathcal G}^{\tau,s}(\boldsymbol\alpha)$, \eqref{eq:parabolic-stack-action} holds, and the natural map induced by $\rho$ is an isomorphism:
\[
 \theta:R^1p_*(\Lie J)\xrightarrow{\sim}\Lie(P/B).
\]
Moreover $(X,P,B)$ is a weak abelian fibration in the sense of
Definition~\ref{def:weak-abelian}.
\item The Hitchin map $h_X:X\to B$ is projective and surjective, and
\[
 \dim X=2d,\qquad
 X_b\text{ is connected and pure of dimension }d\quad(b\in B).
\]
Moreover,
\[
 \dim\{b\in B:\dim\Ab(P_b)\le c\}\le c\qquad(c\ge0).
\]
\end{enumerate}
\end{assumption}

Under Assumption~\ref{assum:parabolic-application}, the preceding deformation calculation shows that $\mathfrak X$ is smooth. Its finite inertia implies that $X$ has finite quotient singularities, hence is a rational homology manifold.

\begin{assumption}\label{assum:picard-equivariance}
For a comparison $(F,a):(X\to B_X)\to(Y\to B_Y)$, there is a homomorphism
$\varphi:P_X\to a^*P_Y$ over $B_X$ such that
\[
 F_a(p\cdot x)=\varphi(p)\cdot F_a(x),
 \qquad F_a(x)=(F(x),h_X(x)).
\]
\end{assumption}

Under this assumption, we take $Q=P_X$, $u=\id$ and $v=\varphi$ in Theorem~\ref{thm:common-action-weak}. The effective-rank inequality is then the rank inequality in Assumption~\ref{assum:parabolic-application}(3). For the diagonal, equivariance holds with $\varphi=\Delta_P$, as used in Theorem~\ref{thm:multiplicativity} below.

\subsection{Proof of multiplicativity}
In this subsection, we prove the multiplicativity for the perverse truncation of direct image sheaves.

The product $h\times h:X\times X\to B\times B$ is the
Hitchin fibration for $G\times G$, with the prescribed
moduli data on each factor. Indeed,
\[
 \mathbb C[\mathfrak g\oplus\mathfrak g]^{G\times G}
 =\mathbb C[\mathfrak g]^G\otimes\mathbb C[\mathfrak g]^G;
\]
see the construction in
\cite[Definitions~3.7--3.8]{DCFFM}. The diagonal homomorphism $G\to G\times G$ induces $\Delta_X$ on these moduli spaces and $\Delta_B$ on
their Hitchin bases, with
\[
 (h\times h)\circ\Delta_X=\Delta_B\circ h.
\]

\begin{theorem}\label{thm:multiplicativity}
Let $h:X\to B$ satisfy Assumption~\ref{assum:parabolic-application}, and put $K=Rh_*\mathbb Q_X$ and $d=\dim B$. Then, for $r,s\in\mathbb Z$, the cup product factors as
\begin{equation}\label{eq:multiplicative-truncations}
 \ptau_{\le r}K\otimes^{\mathbf L}\ptau_{\le s}K
 \longrightarrow\ptau_{\le r+s-d}K\longrightarrow K.
\end{equation}
In particular $P_iH^m(X,\Q)\cup P_jH^n(X,\Q)\subseteq P_{i+j}H^{m+n}(X,\Q)$.
\end{theorem}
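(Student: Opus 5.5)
We apply Theorem~\ref{thm:common-action-weak} to the diagonal comparison
\[
(\Delta_X,\Delta_B):(h:X\to B)\longrightarrow(h\times h:X\times X\to B\times B),
\]
with $Q=P$, $u=\id$ and $v=\Delta_P:P\to\Delta_B^*(P\times P)=P\times_BP$. The plan has three steps: check the hypotheses, read off the Künneth identification, and convert the normalized bound into the stated one.

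\emph{Step 1: hypotheses.}
\begin{itemize}
\item By Assumption~\ref{assum:parabolic-application}, $X$ is a rational homology manifold, since $\mathfrak X$ is smooth with finite inertia. Hence $X\times X$ is a rational homology manifold of dimension $4d=2\dim(B\times B)$.
\item The fibers $X_b\times X_{b'}$ are pure of dimension $2d$.
\item $P\times P$ is a smooth connected commutative group scheme acting on $X\times X$.
\item $(X,P,B)$ is a weak abelian fibration by Assumption~\ref{assum:parabolic-application}(2).
\item Equivariance $\Delta_X(p\cdot x)=(p,p)\cdot(x,x)$ is immediate, which is Assumption~\ref{assum:picard-equivariance} with $\varphi=\Delta_P$.
\item Since $u=\id$, we have $r(b)=\dim\Ab(P_b)$. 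So the effective-rank inequality~\ref{assum:effective-rank inequality} is exactly the bound in Assumption~\ref{assum:parabolic-application}(3).
\end{itemize}
Theorem~\ref{thm:common-action-weak} then gives, for every $k$, a factorization
\[
\Delta_B^*\ptau_{\le 2d+k}(K\boxtimes K)\longrightarrow\ptau_{\le d+k}K\longrightarrow K
\]
of $\beta_{\Delta_X,\Delta_B}$.

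\emph{Step 2: Künneth identification.} The Künneth formula gives $R(h\times h)_*\Q_{X\times X}\simeq K\boxtimes K$. Under it, $\beta_{\Delta_X,\Delta_B}$ becomes the restriction to the diagonal $\Delta_B^*(K\boxtimes K)=K\otimes^{\mathbf L}K\to K$, which is cup product. This follows from naturality of base change along $X\to X\times_BX\to X\times X$.

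Next, $\ptau_{\le r}K\boxtimes\ptau_{\le s}K$ lies in ${}^pD^{\le r+s}(B\times B)$, because external tensor product is t-exact for the perverse t-structures over a field. Its map to $K\boxtimes K$ therefore factors through $\ptau_{\le r+s}(K\boxtimes K)$.

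Pulling back along $\Delta_B$ and taking $k=r+s-2d$ yields
\[
\ptau_{\le r}K\otimes^{\mathbf L}\ptau_{\le s}K=\Delta_B^*(\ptau_{\le r}K\boxtimes\ptau_{\le s}K)\to\Delta_B^*\ptau_{\le r+s}(K\boxtimes K)\to\ptau_{\le r+s-d}K,
\]
and this composite is compatible with cup product into $K$. This is \eqref{eq:multiplicative-truncations}.

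\emph{Step 3: cohomology.} For the filtration statement, set $r=d+i$ and $s=d+j$, take hypercohomology, and use the external cup product
\[
H^m(\ptau_{\le r}K)\otimes H^n(\ptau_{\le s}K)\to H^{m+n}(\ptau_{\le r}K\otimes\ptau_{\le s}K).
\]
This gives $P_i\cup P_j\subseteq P_{i+j}$ by \eqref{eq:cohom-P}.

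The main obstacle I expect is Step 2: making sure the map produced by the theorem really is cup product and not merely some map between the same objects. I would handle this through the explicit description of $\beta$ as $Rh_{Y_a*}$ of the unit $\Q_{Y_a}\to RF_{a*}\Q_X$. Here $Y_a=X\times_BX$ and $F_a$ is the relative diagonal, and the unit map is the standard restriction defining cup product.

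A secondary check is that the product fibration $X\times X\to B\times B$ with $P\times P$ meets all of Assumption~\ref{assum:geometry of comparisons} on the target side, in particular pure fiber dimension and smoothness of $B\times B$. These are straightforward, since the theorem requires only a smooth commutative group action on $Y$.
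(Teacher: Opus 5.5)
Your proposal is correct and follows the paper's proof essentially step for step. Both arguments use the diagonal comparison with $Q=P$, $u=\id$, $v=\Delta_P$, so that the effective rank is $\dim\Ab(P_b)$. Both use rational smoothness of $X\times X$ and pure $2d$-dimensional fibers of $h\times h$, and the bound $\ptau_{\le r}K\boxtimes\ptau_{\le s}K\in{}^pD^{\le r+s}(B\times B)$. Both then apply Theorem~\ref{thm:common-action-weak} at $k=r+s-2d$ and identify the composite with cup product via proper K\"unneth before taking hypercohomology.
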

\begin{proof}
Consider the diagonal maps
\[
 F=\Delta_X:X\to X\times X,\qquad
 a=\Delta_B:B\to B\times B,
\]
which satisfy $(h\times h)\circ F=a\circ h$.
Take $Q=P$, with homomorphisms
\[
 P\xleftarrow{\mathrm{id}}Q=P
 \xrightarrow{\Delta_P}\Delta_B^*(P\times P).
\]
Both homomorphisms have trivial kernel, so the effective rank at $b\in B$ is $\dim\operatorname{Ab}(P_b)$. The map $h\times h$ is projective, its total space has dimension $4d$, and its fibers are connected and pure of dimension $2d$. Moreover, $\operatorname{IC}_{X\times X}\simeq\operatorname{IC}_X\boxtimes\operatorname{IC}_X$, so $X\times X$ is a rational homology manifold. Since
\[
 {}^p\tau_{\le r}K\boxtimes{}^p\tau_{\le s}K
 \in {}^pD^{\le r+s}(B\times B),
\]
Theorem~\ref{thm:common-action-weak}, with $k=r+s-2d$, gives
\[
\begin{aligned}
 \ptau_{\le r}K\otimes^{\mathbf L}\ptau_{\le s}K
 &\simeq\Delta_B^*\bigl(\ptau_{\le r}K\boxtimes\ptau_{\le s}K\bigr)\\
 &\longrightarrow\Delta_B^*\ptau_{\le r+s}(K\boxtimes K)
 \longrightarrow\ptau_{\le r+s-d}K\longrightarrow K.
\end{aligned}
\]
Under proper K\"unneth, pullback along $\Delta_X$ identifies this composite with the cup product on $K$. Taking $r=d+i$, $s=d+j$ and hypercohomology proves the assertion for $P_\bullet$ by
\eqref{eq:cohom-P}.
\end{proof}

\begin{remark}
Iterating \eqref{eq:multiplicative-truncations} gives, for every integer $\ell\ge2$ and $r_1,\ldots,r_\ell\in\Z$, factorizations
\[
 \bigotimes_{i=1}^{\ell}\ptau_{\le r_i}K
 \longrightarrow\ptau_{\le \sum_{i=1}^{\ell}r_i-(\ell-1)d}K
 \longrightarrow K
\]
whose composite is the $\ell$-fold cup-product.
\end{remark}

Replace the rank inequality in Assumption~\ref{assum:parabolic-application}(3) by $\dim\{b\in B:\dim\Ab(P_b)\le c\}\le c+\kappa$ for all $c\ge0$, with $\kappa\in\mathbb Z_{\ge0}$, and keep the other hypotheses.
\begin{corollary}\label{cor:multiplicativity-rank-defect}
For every $r,s\in\mathbb Z$, the cup product on $K=Rh_*\mathbb Q_X$ factors as
\[
 \ptau_{\le r}K\otimes^{\mathbf L}\ptau_{\le s}K
 \longrightarrow\ptau_{\le r+s-d+\kappa}K\longrightarrow K.
\]
Consequently $P_iH^m(X,\Q)\cup P_jH^n(X,\Q)
\subseteq P_{i+j+\kappa}H^{m+n}(X,\Q)$.
\end{corollary}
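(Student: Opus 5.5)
The argument follows the proof of Theorem~\ref{thm:multiplicativity}, with Corollary~\ref{cor:common-action-rank-defect} in place of Theorem~\ref{thm:common-action-weak}. Take the diagonal comparison $F=\Delta_X$, $a=\Delta_B$ from $h:X\to B$ to $h\times h:X\times X\to B\times B$, with $Q=P$ and homomorphisms $P\xleftarrow{\id}P\xrightarrow{\Delta_P}\Delta_B^*(P\times P)$. Since $u=\id$, the effective rank is $r(b)=\dim\Ab(P_b)$. The relaxed hypothesis is then exactly \eqref{eq:common-rank-defect}:
\[
\dim\{b:r(b)\le c\}\le c+\kappa .
\]

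The geometric hypotheses of Assumption~\ref{assum:geometry of comparisons} are the same ones already checked in the proof of Theorem~\ref{thm:multiplicativity}, since the changed rank inequality is not used there.
\begin{enumerate}
\item $X$ is a rational homology manifold by Assumption~\ref{assum:parabolic-application}.
\item $X\times X$ is a rational homology manifold because $\IC_{X\times X}\simeq\IC_X\boxtimes\IC_X$.
\item The map $h\times h$ is projective, with fibers pure of dimension $2d$ over a base of dimension $2d$.
\item $P\times P$ acts on $X\times X$.
\item $(X,P,B)$ is a weak abelian fibration by Assumption~\ref{assum:parabolic-application}(2).
\end{enumerate}
Equivariance of $\Delta_X$ under the diagonal action is immediate.

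Corollary~\ref{cor:common-action-rank-defect}, with $d_Y=2d$ and $d_X=d$, gives factorizations
\[
\Delta_B^*\ptau_{\le 2d+k}(K\boxtimes K)\longrightarrow\ptau_{\le d+k+\kappa}K\longrightarrow K .
\]
Set $k=r+s-2d$. Since $\ptau_{\le r}K\boxtimes\ptau_{\le s}K\in{}^pD^{\le r+s}(B\times B)$, this exterior product maps to $\ptau_{\le r+s}(K\boxtimes K)$. Pulling back along $\Delta_B$ and using $\Delta_B^*(\ptau_{\le r}K\boxtimes\ptau_{\le s}K)\simeq\ptau_{\le r}K\otimes^{\mathbf L}\ptau_{\le s}K$ produces the map into $\ptau_{\le r+s-d+\kappa}K$. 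By proper K\"unneth, $\beta_{\Delta_X,\Delta_B}$ is identified with the cup product, exactly as in the proof of Theorem~\ref{thm:multiplicativity}. So the composite to $K$ is the cup product, as claimed.

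For cohomology, substitute $r=d+i$ and $s=d+j$, so the target is $\ptau_{\le d+(i+j+\kappa)}K$. Applying hypercohomology and using \eqref{eq:cohom-P} gives
\[
P_iH^m(X,\Q)\cup P_jH^n(X,\Q)\subseteq P_{i+j+\kappa}H^{m+n}(X,\Q).
\]

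The only point needing care is bookkeeping: the defect $\kappa$ enters only through the rank condition on $B_X=B$, and not through $B\times B$. This holds because $r(b)$ is computed for $Q\to P_X$, and Corollary~\ref{cor:common-action-rank-defect} uses only that condition. There is no substantive obstacle beyond the results already stated.
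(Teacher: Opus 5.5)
Your proposal is correct and follows the paper's proof: rerun the diagonal argument of Theorem~\ref{thm:multiplicativity} with Corollary~\ref{cor:common-action-rank-defect} applied at $k=r+s-2d$, which yields the target $\ptau_{\le r+s-d+\kappa}K$. Your observation that the defect $\kappa$ enters only through the effective-rank condition on $B_X=B$, where $r(b)=\dim\Ab(P_b)$ because $u=\id$, is exactly what the paper's one-line proof relies on.
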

\begin{proof}
Repeat the arguments in Theorem~\ref{thm:multiplicativity} with Corollary~\ref{cor:common-action-rank-defect} at $k=r+s-2d$.
\end{proof}

Let $C$ be a smooth projective curve of genus at least two,
and fix $N\ge2$, $e\in\Z$ and $\Lambda\in\Pic^e(C)$ with
$\gcd(N,e)=1$.
Let $M_{N,e}$ be the moduli space of semistable rank-$N$,
degree-$e$, $K_C$-valued Higgs bundles, and let
$M_{\SL_N}^{\Lambda,0}(C,\varnothing)$ denote its closed subspace
defined by $\det E\simeq\Lambda$ and $\operatorname{tr}\phi=0$.

\begin{corollary}\label{cor:MSY}
Let $X=M_{N,e}$ or $X=M_{\SL_N}^{\Lambda,0}(C,\varnothing)$,
and let $h:X\to B$ be its Hitchin morphism.
Put $K=Rh_*\Q_X$ and $d=\dim B$.
Then cup-product satisfies
\eqref{eq:multiplicative-truncations} for all $r,s\in\Z$.
In particular,
\[
 P_iH^a(X,\Q)\cup P_jH^b(X,\Q)
 \subseteq P_{i+j}H^{a+b}(X,\Q).
\]
\end{corollary}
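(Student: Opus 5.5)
The plan is to deduce Corollary~\ref{cor:MSY} from Theorem~\ref{thm:multiplicativity} by checking Assumption~\ref{assum:parabolic-application} for $D=\varnothing$ with $G=\GL_N$ (giving $M_{N,e}$) and for the twisted traceless $\SL_N$ locus. Once the three items of that assumption are verified, the factorization \eqref{eq:multiplicative-truncations} and the statement for $P_\bullet$ follow verbatim. For $\SL_N$ the space is not literally the coarse space of $\mathcal M_{\mathcal G}^{\tau,ss}$ for a fixed degree. I would therefore realize it as the fixed-determinant, traceless closed subspace of $M_{N,e}$, with base $B=\bigoplus_{i=2}^N H^0(C,K_C^i)$ and group $P$ the identity component of the Prym of the spectral cover. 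All arguments below are run directly for this pair.

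\textbf{Items (1) and (2).} Since $\gcd(N,e)=1$, semistability equals stability. Every stable Higgs bundle has automorphisms $\mathbb G_m$, or $\mu_N$ after fixing the determinant. Hence the rigidified stack is an algebraic space with trivial inertia, or finite inertia in the $\SL_N$ case. In fact both $M_{N,e}$ and $M^{\Lambda,0}_{\SL_N}$ are smooth quasi-projective, so they are rational homology manifolds, and smoothness from \S\ref{subsec:hitchin-geometry} gives $\dim X=2d$. For $\GL_N$ the regular centralizer $J_b$ is $\pi_{b*}\mathbb G_m$ on the spectral curve. The $\mathcal P$-action is tensoring by line bundles on $C_b$, whose degree-zero part preserves degree and stability. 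The identity $\theta$ is the standard computation $R^1p_*\Lie J=R^1p_*\pi_*\mathcal O$. For $\SL_N$ one takes the norm-one subgroup (the Prym). Preservation of $\det\simeq\Lambda$ and $\operatorname{tr}\phi=0$ is immediate. Affine stabilizers and polarizability of the Tate module are Ng\^o's results for Hitchin fibrations \cite{Ngo}; see also \cite[Theorem~A]{DCFFM}, which covers $\deg L>0$, in particular $L=K_C$ with $g\ge2$.

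\textbf{Item (3).} Projectivity and surjectivity of $h$ are classical (Nitsure, Hitchin). Connectedness of the fibers follows from the spectral correspondence for $\GL_N$. For $\SL_N$ with coprime degree it follows from connectedness of the Prym action on the fibers together with the BNR description. Purity of fiber dimension $d$ follows from Lagrangian-ness of the fibers, via the Hamiltonian identity \eqref{eq:parabolic-hamiltonian}, combined with flatness of $h$ (Ginzburg, Ng\^o). The remaining inequality $\dim\{b:\dim\Ab(P_b)\le c\}\le c$ is exactly $\delta$-regularity, using $\dim\Ab(P_b)=d-\delta(b)$. For $K_C$-twisted $\GL_N$ and $\SL_N$ Hitchin fibrations over the whole base, this is established by Ng\^o's codimension estimate. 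In characteristic zero the estimate holds on the entire base for $L=K_C$, by de Cataldo--Heinloth--Migliorini and \cite[Theorem~A]{DCFFM}.

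\textbf{Main obstacle.} The delicate point is global $\delta$-regularity over the full base, including the nilpotent-cone region, for the canonical ($D=\varnothing$, $L=K_C$) case. Ng\^o's original estimate is stated over an open subset. I would cite the full-base $\delta$-regularity from \cite{DCFFM} (their Theorem~A asserts the Ng\^o fibration property, which includes $\delta$-regularity). For $\SL_N$ I would reduce to $\GL_N$ via the isogeny $\Pic^0(C)\times\mathrm{Prym}\to P_{\GL_N}$, which preserves the dimensions of abelian parts.
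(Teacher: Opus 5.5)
Your proposal is correct and follows the paper's route. You verify Assumption~\ref{assum:parabolic-application} and apply Theorem~\ref{thm:multiplicativity}, whose proof uses only the geometric hypotheses you check; for the twisted $\SL_N$ space you work with the fixed-determinant traceless locus of $M_{N,e}$ acted on by the neutral Prym, which is how the paper argues via the spectral norm. The only difference is in sources: Ng\^o's results cover the locus of reduced spectral curves rather than the full $K_C$-base, so, as you anticipate, full-base polarizability and $\delta$-regularity must come from \cite[Theorem~7.16]{DCFFM} and \cite[Section~2.2]{DCHMSupport}, with the paper additionally citing \cite[Theorem~1.2]{AFPolarizability} for polarizability.
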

\begin{proof}
The geometric hypotheses follow from
\cite[Section~2.2]{DCHMSupport} and
\cite[Section~1.1]{DMSSLPW}.
For the fixed-determinant space, the spectral norm identifies
the acting neutral Prym group with the $\SL_N$ regular-centralizer
Picard group; preservation of stability and affine stabilizers
are inherited from the $\GL_N$ action.
The required group properties, Lie identification and full-base
$\delta$-regularity follow from
\cite[Propositions~4.12, 4.17, 4.29, Corollary~4.27 and Theorem~7.16]{DCFFM}.
The rational Betti polarization is supplied by
\cite[Theorem~1.2]{AFPolarizability}.
Apply Theorem~\ref{thm:multiplicativity}.
\end{proof}

In particular, the cohomological consequence gives another proof
of multiplicativity of the perverse filtration for coprime
$\GL_N$ Hitchin moduli
\cite{MSPW,HMMSSPW,MSYFourier}
and fixed-determinant traceless $\SL_N$ Hitchin moduli
\cite[Theorem~1.1]{WenSLMult}.
For related motivic and sheaf-theoretic multiplicativity results
of Maulik--Shen--Yin, see
\cite[Theorem~2.6(ii)--(iii)]{MSYFourier},
\cite{MSYCycles}, and
\cite[Theorem~3.2(b)]{MSYDualizable}.

\providecommand{\bysame}{\leavevmode\hbox to3em{\hrulefill}\thinspace}

\end{document}